\documentclass[a4paper,12pt,reqno]{amsart}
\usepackage{latexsym, mathabx}
\usepackage{amsmath,amsthm,amssymb,amscd}
\usepackage{fullpage}
\usepackage{xcolor}
\usepackage{parskip}
\usepackage{mathtools}
\usepackage{thmtools}
\mathtoolsset{showonlyrefs}
\usepackage{csquotes}
\usepackage{comment}
\usepackage{pdfpages}
\usepackage[activate={true,nocompatibility},final,tracking=true,kerning=true,spacing=true]{microtype}
\usepackage[pdfencoding=unicode,pdftex,bookmarks=true,bookmarksnumbered]{hyperref}
\definecolor{citecolour}{rgb}{0.0, 0.0, 0.8}
\colorlet{linkcolour}{green!50!black}
\hypersetup{colorlinks,breaklinks,
		linkcolor=linkcolour,citecolor=citecolour,
		filecolor=linkcolour, urlcolor=linkcolour}
\definecolor{paperblue}{RGB}{20,60,130}
\definecolor{paperpurple}{RGB}{95,55,135}
\usepackage{txfonts,pxfonts,tikz} 
\usepackage{tgschola}
\usepackage[T1]{fontenc}

\newtheorem{prevtheorem}{Theorem}

\newtheorem{theorem}{Theorem}

\newtheorem{proposition}{Proposition}

\newtheorem{lemma}[theorem]{Lemma}
\newtheorem{corollary}{Corollary}

\newtheorem{question}{Question}
\newtheorem{definition}{Definition}

\newtheorem*{corollary*}{Corollary}

\newenvironment{proofofA}{{\it {Proof of Theorem \ref{Thm:main}.} }}{\hfill $\blacksquare$ \\}

\def\Ker{\mathrm{Ker}}
\def\Ima{\mathrm{Im}}
\def\Z{\mathbf{Z}}

\def\R{\mathbf{R}}
\def\F{\mathbf{F}}

\def\Aut{\mathrm{Aut}}

\def\End{\mathrm{End}}
\def\GL{\mathrm{GL}}
\def\Mat{\mathrm{Mat}}
\def\P{\mathcal{P}}

\title{A simultaneous variant of the Hall-Paige theorem and applications}
\author{Maria Loukaki }
\address{Department of Mathematics and Applied Mathematics, University of Crete, Voutes Campus, 70013 Heraklion, Greece}
\email{mloukaki@uoc.gr}

\date{}

\begin{document}

\begin{abstract}
    For $k\geq 1$ , we say that a finite group $G$  satisfies property $\mathcal P_k$ if, for every $\beta_1,\ldots,\beta_k\in\operatorname{Aut}(G)$, there exists a bijection $f$ of $G$  such that each map $g\mapsto \beta_i(g)f(g)$ is a bijection. For $k=1$ this is equivalent to the classical Hall--Paige complete mapping property. We prove that if $p$ is the smallest prime divisor of $|G|$, then $G$ satisfies $\mathcal P_k$ for every $k<p-1$ and we show that this bound is sharp. For finite abelian $p$-groups, the bijection $f$ may moreover be chosen to be an automorphism, and for noncyclic homocyclic $p$-groups this remains true for $k=p-1$. Finally, we interpret $\mathcal P_k$ in terms of common transversals and common orthogonal mates of Latin squares generated from automorphisms and  the Cayley table  of a finite group $G$.
\end{abstract}

\keywords{Hall-Paige complete mapping, strong complete mapping, tranversals of Latin squares }

\subjclass[2020]{20D60, 05B15, 20D45, 20D15.}
\maketitle
\section{Introduction}
A bijection $\theta: G \to G$  of a finite group $G$ is a {\it complete mapping} of $G$ if the mapping $x \to  x\theta(x)$ is a bijection of $G$. 
Complete mappings were introduced by Mann in 1942 in connection with the construction of orthogonal Latin squares \cite{Man42}, and were subsequently studied systematically by Paige \cite{Pai51}. They arise naturally from the Cayley table of a group: a finite group admits a complete mapping if and only if its Cayley table has a transversal, or equivalently an orthogonal mate.

Hall and Paige proved in 1955 that a necessary condition for a finite group $G$ to admit a complete mapping is that the Sylow $2$-subgroup of $G$ is either trivial or noncyclic, and conjectured that this condition is also sufficient \cite{HP55}. They proved the conjecture for several important classes of groups, including all solvable groups. The conjecture remained open for more than fifty years and was eventually completed through the work of Wilcox, Evans and Bray \cite{Wil09,Eva09}. Thus a finite group admits a complete mapping if and only if its Sylow $2$-subgroup is trivial or noncyclic. Recently, complete mappings and the Hall--Paige theorem have been revisited from different points of  view;  the work of Eberhard, Manners and Mrazovi'c  obtained asymptotic formulae for the number of complete mappings of large groups in \cite{EMM22}, 
while  M{\"u}yesser and Pokrovskiy use techniques from probabilistic combinatorics to prove  a "random version" of the Hall--Paige theorem \cite{MP25}.

 A related notion is that of an {\it orthomorphism}. A bijection $\theta: G \to G$ of a finite group $G$ is called an orthomorphism if the mapping $x \to x^{-1} \theta(x)$ is also a bijection, and $\theta$ is called a   {\it strong complete mapping} if both $x\mapsto x\theta(x)$ and $x\mapsto x^{-1}\theta(x)$ are bijections. Thus a strong complete mapping is simultaneously a complete mapping and an orthomorphism.
 Evans raised the problem of characterising the groups 
 which contain strong complete mappings (see \cite{Eva12}). In particular, Evans proved that a finite abelian group admits a strong complete mapping if and only if neither its Sylow $2$-subgroup nor its Sylow $3$-subgroup is nontrivial and cyclic \cite{Eva12}. Strong complete mappings have also appeared in connection with orthogonal Latin squares and other combinatorial constructions.

The purpose of the present paper is to consider a simultaneous version of complete mappings.  Instead of a single bijection being both a complete mapping and an orthomorphism (as in the case of strong complete maps)  we require one bijection to form a complete-mapping-type pair with each automorphism in  an arbitrary collection of automorphisms. It is worth mentioning that the definition of property $\P_k$ arose from our attempt to generalize the findings in \cite{LS} to over three subgroups,  specifically,  by seeking common transversals for four or more subgroups of the same index within an abelian group (see Section \ref{sec:applications}).(A transversal of a subgroup $H$ in a group $G$ is a set with one representative from each coset of $H$ in $G$.)

We  define the property $\P_k$ for any integer $k\geq 1$ as follows.
\begin{definition}
We say that a group $G$ satisfies $\P_k$, for some $k \geq 1$, if
for every $\beta_1,\dots,\beta_k \in \Aut(G)$, there exists a bijection  $f$ of $G$ such that every map $g \mapsto \beta_i(g)f(g)$  is a bijection  of $G$. 
\end{definition}

The case $k=1$ is exactly the classical complete-mapping problem. Indeed, if a complete map $f$  exists in $G$  then for any $\beta_1 \in \Aut(G)$, the map $ f \circ \beta_1$ is a bijection of $G$ while  $g \to  \beta_1(g)f(\beta_1(g)) $ is also a bijection. Hence the Hall--Paige theorem gives a complete characterization of the groups satisfying $\P_1$.

Things get more interesting when $k=2$. While every abelian group satisfying property $\P_2$ is straightforwardly shown to possess a strong complete map, these two properties are not equivalent even for abelian groups. For instance, consider the abelian group $G = \Z_2 \times \Z_2$. According to Evans' criterion for abelian groups, $G$ admits a strong complete mapping. However, property $\P_2$ fails for the pair of automorphisms $\beta_1, id$, where $\beta_1$ interchanges the two copies of $\Z_2$, i.e., $\beta_1(1,0)=(0,1)$ and $\beta_1(0,1)=(1,0)$. (This can be easily verified as follows; first  translate $f$  so that $f((0,0))=(0,0)$. If both $f$  and $id +f$  are permutations on the 3 non-zero elements of $G$, then $f$  acts on these as a fixed-point-free permutation, hence as one of the two $3$-cycles. But then one of those cycles gives $(\beta_1+f)((1,0))=(0,0)$, and the other gives $(\beta_1+f)((0,1))=(0,0)$, while $(\beta_1+f)((0,0))=(0,0)$.) 

Our main existence result reads:
\begin{prevtheorem}\label{Thm:main}
Assume $G$ is a finite group. Let $p$ be the smallest prime divisor of the order $|G|$ of $G$. Then $G$ satisfies $\P_k$ for all $1 \leq k <p-1$. 
\end{prevtheorem}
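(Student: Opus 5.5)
We would prove Theorem \ref{Thm:main} by reducing to elementary abelian groups. The case $p=2$ is vacuous, so assume $p\ge 3$. Then $|G|$ is odd and $G$ is solvable by the Feit--Thompson theorem. Refine the derived series by the layers $\Omega$/agemo-type subgroups to get a chain $G=G_0>G_1>\dots>G_s=1$. Each $G_j$ is characteristic in $G$, and each factor $G_j/G_{j+1}$ is elementary abelian of order $q^{r}$ for some prime $q\ge p$. We then argue by induction on $|G|$, using a lifting lemma (Step 1) and a base case (Step 2).

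\textbf{Step 1 (lifting through a characteristic normal subgroup).} Let $N$ be a minimal member of the chain, so $N$ is characteristic and elementary abelian. Each $\beta_i$ induces an automorphism $\bar\beta_i$ of $G/N$. By induction there is a bijection $\bar f$ of $G/N$ such that every map $gN\mapsto \bar\beta_i(gN)\bar f(gN)$ is a bijection. Fix a transversal $T$ of $N$ and a set-theoretic lift $s$ of cosets to $T$.

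We look for $f$ of the form $f(tn)=s(\bar f(tN))\,A_t(n)$ for $t\in T$, $n\in N$, where each $A_t$ is a bijection of $N$. Write $y=s(\bar f(tN))$. Then
\[
\beta_i(tn)f(tn)=\beta_i(t)\,y\,\bigl(y^{-1}\beta_i(n)y\bigr)A_t(n).
\]
Since $\bar f$ already handles the cosets, it suffices that for each fixed $t$ the map $n\mapsto \bigl(c_y\circ\beta_i|_N\bigr)(n)\,A_t(n)$ is a bijection of $N$ for all $i$. Here $c_y\circ\beta_i|_N$ is an automorphism of $N$, because $N$ is normal and characteristic. So it is enough to know that $N$ satisfies $\mathcal P_k$ for arbitrary $k$-tuples of automorphisms of $N$. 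The translation factor $\beta_i(t)y$ is harmless: it only relabels the coset.

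\textbf{Step 2 (the elementary abelian case, main obstacle).} Let $N=\F_q^r$ with $q\ge p>k+1$, and let $B_1,\dots,B_k\in \GL_r(q)$. We want a linear map $A$ such that every $B_i+A$ is invertible; then $f=A$ works, and it is even an automorphism. Taking $A$ scalar can fail, since the eigenvalues of the $B_i$ may cover all of $\F_q$. Instead consider the polynomial
\[
P(X)=\prod_{i=1}^k\det(B_i+X)
\]
in the $r^2$ entries of $X$. Its degree in each single variable is at most $k<q$. It is not identically zero, since $X=0$ gives $\prod_i\det B_i\ne0$. By the Combinatorial Nullstellensatz (or simply because a nonzero polynomial of degree less than $q$ in each variable does not vanish on all of $\F_q^{r^2}$), there is a choice of $A$ with $P(A)\ne0$. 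Such an $A$ need not itself be invertible, but Step 1 only needs $f$ to be a bijection. We secure this by also including the factor $\det X$, which raises the degree in each variable to $k+1\le q-1<q$. This is exactly where the hypothesis $k<p-1$ enters, and it explains why we expect the bound to be sharp.

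The delicate point we anticipate is this degree count, together with checking that the per-coset choices $A_t$ in Step 1 assemble into a global bijection $f$. The latter holds because $f$ permutes cosets via $\bar f$ and is bijective on each fibre. We also need that the strengthened base case (with the extra factor $\det X$) is exactly what the induction uses.
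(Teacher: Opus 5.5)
Your proposal is correct, and its architecture is the same as the paper's. Odd order gives solvability, and you induct on $|G|$ through a characteristic elementary abelian $q$-subgroup $N$ with $q\ge p$. The paper takes a minimal characteristic subgroup and checks directly that it is elementary abelian, rather than refining the derived series; this difference is immaterial. Your Step~1 is exactly Proposition~\ref{prop: Extension-property}, including the conjugated automorphisms $\alpha_{i,t}(n)=\sigma(t)^{-1}\beta_i(n)\sigma(t)$, which are your $c_y\circ\beta_i|_N$.

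The genuine difference is the elementary abelian base case.
\begin{itemize}
\item The paper (Theorem~\ref{Thm:k<p-1-abelian-p-case}) counts. For each $i$ the bad matrices form a translate $T-\overline{A_i}$ of the singular matrices. A union bound then leaves at least $p^{n^2}\bigl[(k+1)\prod_{t=1}^{n}(1-p^{-t})-k\bigr]>0$ admissible invertible matrices, using $\prod_t(1-p^{-t})>\frac{p-2}{p-1}\ge\frac{k}{k+1}$.
\item You apply the nonvanishing lemma to $\det X\prod_i\det(B_i+X)$, whose degree in each entry is at most $k+1\le q-1$.
\end{itemize}

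Your route is shorter, avoids the product estimate, and makes the threshold transparent. For $r=1$, $k=q-1$ and $\{b_1,\dots,b_{q-1}\}=\F_q^*$, the polynomial becomes $x\prod_{c\neq0}(x+c)=x^q-x$. This is the same obstruction the paper detects in Section~3 via Lemma~\ref{lem:PolynomialZero}.

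The counting route gives an explicit lower bound on the number of admissible $f$. It is also the one that the paper pushes past the threshold: Proposition~\ref{Prop.Homocyclic.k=p-1} reaches $k=p-1$ for $n\ge 2$ by sharpening the bound on $|K_0|$. Your polynomial would then have degree $q$ in each variable, and the nonvanishing lemma no longer applies directly.

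One small repair is needed. After you insert the factor $\det X$, evaluating at $X=0$ no longer shows $P\not\equiv 0$, since $\det 0=0$. Instead, note that $\F_q[x_{ab}]$ is an integral domain and each factor is a nonzero polynomial.
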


As an application we prove the following (see Section~4 for all  the relevant definitions):
 \begin{corollary} \label{col:commonTrans+Mate}
     Let $G$ be a finite group whose smallest prime divisor is $p$. If $k < p -1$, then for any automorphisms $\beta_1, \dots, \beta_k \in \Aut(G)$  the Latin squares $L_i(x,y)=\beta_i(x)y$ have a common transversal and a common orthogonal mate.
\end{corollary}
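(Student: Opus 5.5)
The plan is to obtain everything from the single bijection supplied by Theorem~\ref{Thm:main}, using the standard dictionary between complete mappings and transversals. Here I take a transversal of an $n\times n$ Latin square (with $n=|G|$) to be a set of $n$ cells, one in each row and one in each column, carrying pairwise distinct symbols. An orthogonal mate of $L$ is a Latin square $M$ such that the pairs $(L(x,y),M(x,y))$ are pairwise distinct. I expect these to be the definitions given in Section~4. Since $k<p-1$, Theorem~\ref{Thm:main} applies to $\beta_1,\dots,\beta_k$. It gives a bijection $f$ of $G$ such that each map $\phi_i\colon x\mapsto \beta_i(x)f(x)$ is a bijection, for $1\le i\le k$. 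We also note that each $L_i$ really is a Latin square: $x\mapsto \beta_i(x)y$ is bijective because $\beta_i$ is, and $y\mapsto \beta_i(x)y$ is bijective by cancellation.

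\emph{Common transversal.} Take $T=\{(x,f(x)) : x\in G\}$. It meets each row exactly once by construction, and each column exactly once because $f$ is a bijection. Its symbols in $L_i$ are $\beta_i(x)f(x)=\phi_i(x)$, and these are pairwise distinct because $\phi_i$ is injective. So $T$ is a transversal of every $L_i$ simultaneously.

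\emph{Common orthogonal mate.} Right-translate $T$: for $h\in G$ set $T_h=\{(x,f(x)h) : x\in G\}$. The same argument shows that each $T_h$ is a common transversal, since $y\mapsto yh$ is a bijection and the symbols are $\phi_i(x)h$. Every cell $(x,y)$ lies in exactly one $T_h$, namely $h=f(x)^{-1}y$. Accordingly, define $M(x,y)=f(x)^{-1}y$, so that $T_h$ is exactly the set of cells on which $M$ takes the value $h$.
\begin{itemize}
\item $M$ is a Latin square. In a fixed row, $y\mapsto f(x)^{-1}y$ is bijective. In a fixed column, $x\mapsto f(x)^{-1}y$ is bijective because $f$ and inversion are bijections.
\item $M$ is orthogonal to each $L_i$. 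Suppose $M(x,y)=M(x',y')=h$ and $L_i(x,y)=L_i(x',y')$. Then $y=f(x)h$ and $y'=f(x')h$, so $\phi_i(x)h=\phi_i(x')h$. Hence $x=x'$ by injectivity of $\phi_i$, and then $y=y'$.
\end{itemize}

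There is no real obstacle here: all the difficulty lives in Theorem~\ref{Thm:main}. The only step needing care is confirming that the paper's Section~4 definitions of transversal and orthogonal mate match the standard ones used above (for instance, whether symbols, rows and columns are all indexed by $G$). If they are phrased differently, the same $f$ and the same translates $T_h$ should still work after that translation.
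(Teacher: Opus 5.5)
Your proof is correct and matches the paper's argument: take the bijection $f$ from Theorem~\ref{Thm:main}, use $T_f=\{(x,f(x)) : x\in G\}$ as the common transversal, and use $M(x,y)=f(x)^{-1}y$ as the common orthogonal mate. You also carry out the checks the paper leaves as ``easily verified'', namely that $M$ is a Latin square and that the pairs $(M(x,y),L_i(x,y))$ are distinct.
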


\autoref{Thm:main} offers insights beyond the Hall-Paige theorem only for odd-order groups (and thus solvable), where $k \geq 2$. For its proof we first proved  it for finite abelian $p$-groups and then we extend it to arbitrary finite solvable groups. In fact,  for abelian $p$-groups, we can more strongly conclude that given $\beta_1,\ldots,\beta_k \in \Aut(G)$, the bijection $f$ can be an automorphism itself, making all maps $\beta_i+f$ automorphisms as well (see \autoref{Thm:k<p-1-abelian-p-case}).

The threshold $p-1$  cannot be improved in general;  we prove that  cyclic $p$-groups do not satisfy  $\P_{p-1}$ for any prime $p$ (see \autoref{thm:Cyclic-not-P_{p-1}} ). 
On the other hand, we show that the obstruction at $p-1$ disappears for non-cyclic homocyclic $p$-groups: every such group satisfies $\mathcal P_{p-1}$, with the simultaneous bijection chosen (again)  to be an automorphism (see \autoref{Prop.Homocyclic.k=p-1}). These results suggest the following natural analogue, for abelian $p$-groups, of the Hall--Paige characterization: does every noncyclic finite abelian $p$-group satisfy $\P_{p-1}$? We have not managed to answer this question. 

The paper is organized as follows. In Section~2 we prove an extension result that groups with  property  $\P_k$ satisfy and prove \autoref{Thm:main} along with special results for abelian groups and homocyclic $p$-groups. In  Section~3  we prove that the upper bound $k < p-1$ is sharp and finally in   Section~4 we apply the theory to Latin squares and  transversals of subgroups.

\section{simultaneously complete mappings}
This section aims to prove \autoref{Thm:main}. We begin by noting some  basic properties of $\P_k$.  First, if a group $G$ satisfies $\P_k$, it also satisfies $\P_r$ for all $1 \leq r \leq k$. 
Second, an extension result, a simultaneous analogue of Hall and Paige’s extension theorem for complete mappings, as described in \cite[Theorem 1 and Corollary 2]{HP55}, also holds for characteristic subgroups, as the next proposition shows.
\begin{proposition}\label{prop: Extension-property}
    Assume $N $ is a characteristic subgroup of the finite group $G$. If $N$ and $G/N$ satisfy property $\P_k$ for some integer $k$ then $G$ satisfies $\P_k$. 
\end{proposition}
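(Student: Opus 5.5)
We follow the Hall--Paige extension argument, building the bijection $f$ on $G$ coset by coset. Fix $\beta_1,\dots,\beta_k\in\Aut(G)$. Since $N$ is characteristic, each $\beta_i$ restricts to an automorphism $\beta_i|_N\in\Aut(N)$ and induces an automorphism $\bar\beta_i$ of $G/N$ given by $\bar\beta_i(gN)=\beta_i(g)N$. This is exactly where characteristicity is used.

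\emph{Step 1: the quotient.} By $\P_k$ for $G/N$ there is a bijection $\bar f$ of $G/N$ such that every map $xN\mapsto \bar\beta_i(xN)\bar f(xN)$ is a bijection of $G/N$.

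\emph{Step 2: lifting.} Choose a transversal $T$ of $N$ in $G$, and for each $t\in T$ choose a representative $s_t\in \bar f(tN)$. Then $\beta_i(tN)\,s_tN=\bar\beta_i(tN)\bar f(tN)$, and as $t$ runs over $T$ these cosets are pairwise distinct for each fixed $i$.

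\emph{Step 3: inside $N$.} By $\P_k$ for $N$, applied to $\beta_1|_N,\dots,\beta_k|_N$, there is a bijection $h$ of $N$ such that each map $n\mapsto\beta_i(n)h(n)$ is a bijection of $N$. Every $g\in G$ can be written uniquely as $g=tn$ with $t\in T$ and $n\in N$. We define
\[
f(tn)=s_t\,\gamma_t(h(n)),
\]
where $\gamma_t$ is an automorphism of $N$ still to be chosen. Because $f$ maps $tN$ bijectively onto $s_tN$ and $\bar f$ is a bijection, $f$ is a bijection of $G$.

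Next we compute
\[
\beta_i(tn)f(tn)=\beta_i(t)\beta_i(n)s_t\gamma_t(h(n))=\beta_i(t)s_t\cdot s_t^{-1}\beta_i(n)s_t\,\gamma_t(h(n)).
\]
This element lies in the coset $\beta_i(t)s_tN$. These cosets are distinct for distinct $t$, so it suffices that for each fixed $t$ the map $n\mapsto s_t^{-1}\beta_i(n)s_t\,\gamma_t(h(n))$ is a bijection of $N$.

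\emph{Step 4: choosing $\gamma_t$.} This is the main obstacle, since the conjugation by $s_t$ twists the term $\beta_i(n)$. We take $\gamma_t=c_{s_t}|_N$, where $c_{s_t}(m)=s_t^{-1}ms_t$; this is an automorphism of $N$ because $N$ is normal. Then
\[
s_t^{-1}\beta_i(n)s_t\,\gamma_t(h(n))=c_{s_t}\bigl(\beta_i(n)h(n)\bigr),
\]
which is the bijection $n\mapsto\beta_i(n)h(n)$ of $N$ followed by the automorphism $c_{s_t}$ of $N$, hence a bijection of $N$.

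Therefore, for each $i$, the map $g\mapsto\beta_i(g)f(g)$ sends each coset $tN$ bijectively onto $\beta_i(t)s_tN$. Since these target cosets partition $G$, the map is a bijection of $G$. This proves that $G$ satisfies $\P_k$.
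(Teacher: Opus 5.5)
Your proof is correct and is essentially the paper's argument: the same coset-by-coset lift of a quotient bijection along a transversal, with characteristicity used exactly to obtain $\bar\beta_i$ and $\beta_i|_N\in\Aut(N)$. The one difference is that the paper applies $\P_k$ for $N$ separately for each $t\in T$, to the conjugated automorphisms $n\mapsto\sigma(t)^{-1}\beta_i(n)\sigma(t)$, whereas you apply it once to the $\beta_i|_N$ and transport $h$ by conjugation (so that $f(tn)=h(n)s_t$); this shows the paper's $\phi_t$ may simply be taken to be $c_{\sigma(t)}\circ h$, a small streamlining.
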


\begin{proof}
We clearly can assume that $N < G$. Let $\{\beta_i\}_{i \leq k}$ be automorphisms of $G$. 
 Since $N$ is characteristic in $G$ and each $\beta_i \in \Aut(G)$, each $\beta_i$ induces an automorphism $\bar{\beta}_i \in \Aut(G/N)$ defined by 
\[
\bar{\beta}_i(gN) := \beta_i(g) N.
\]

Because $G/N$ satisfies $\P_k$,  there exists a bijection $\bar{\theta} : G/N \to G/N$ such that 
\[
gN \to \bar{\beta}_i(gN)\bar{\theta}(gN) 
\]
 is a bijection of $G/N$ for all $i=1, \dots , k$.

Let $T$ be a transversal of $N$ in $G$. Since $\bar{\theta}$ is a bijection of $G/N$, there exists a permutation $\sigma$ of $T$  such that 
\[
\bar{\theta}(t N) = \sigma(t) N,
\]
for all $t \in T$. We also note  that  every element $g \in G$ is written uniquely as $g= tn$ with  $t \in T$ and $n \in N$.

Now fix $t \in T$ and observe that for every $i =1, \dots, k$, the map $\alpha_{i,t}: G \to G$ defined as 
\[
\alpha_{i,t}(g)= \sigma(t)^{-1} \beta_i(g) \sigma(t),
\]
is also an automorphism of $G$ because $\beta_i$ is. Furthermore,  
each restriction $ \alpha_{i,t} |_N $ lies in $\Aut(N)$, because  $N$ is characteristic subgroup in $G$. Now $N$ also satisfies $\P_k$ hence  we obtain  a bijection  $\phi_t $  such  that, for each $i=1, \dots, k$,  the map
\[
N \ni n \to \alpha_{i,t}(n)\phi_t(n)
\]
is a bijection  of $N$.

We define $f: G \to G$ as 
\[
f(tn):= \sigma(t) \phi_t(n)
\]
for every $t \in T$ and $n \in N$ and we claim that $f$ is a bijection of $G$. Indeed, assume 
\begin{equation}\label{eq:f-bijection}
\sigma(t) \phi_t(n)= \sigma(t') \phi_{t'}(n')
\end{equation}
where $(t, n) \neq (t',n')$. Since $T$ is a transversal of $N$ and $\phi_t ( n), \phi_{t'}(n') \in N$, we must have $\sigma(t) = \sigma(t')$, which implies $t=t'$. Equation   \eqref{eq:f-bijection} then simplifies to $\phi_t(n)= \phi_{t}(n')$. As $\phi_t$ is a bijection, it follows that $n=n'$, contradicting our initial assumption and thus proving our  claim.

To complete the proof, it remains to show that $\beta_i(tn) f(tn)$ is a bijection of $G$,  for every $i=1, \dots, k$. Assume $g\neq g'$ are  distinct elements of $G$, with $g =tn$  and $g'=t'n'$, so  $(t,n) \neq (t',n')$. If 
\begin{equation}\label{eq:b_if-bijection}
    \beta_i(tn)\sigma(t)\phi_t(n)= \beta_i(t'n')\sigma(t')\phi_{t'}(n') 
\end{equation}
then      $\bar{\beta}_i(tN) \bar{\theta}(tN)= \bar{\beta}_i(t'N) \bar{\theta}(t'N)$. 
Hence $tN= t'N$ as $gN \to \bar{\beta}_i(gN)\bar{\theta}(gN) $
 is a bijection of $G/N$ and so $t=t'$, as $T$ is a transversal.  Thus  \eqref{eq:b_if-bijection} implies that 
\[
\sigma(t)^{-1}\beta_{i}(n)\sigma (t) \phi_t(n)= \sigma(t)^{-1}\beta_i(n') \sigma(t) \phi_t(n'),
\] 
since $\beta_i$ is an automorphism of $G$.  So   $ \alpha_{i,t}(n)\phi_t(n)= \alpha_{i,t}(n')\phi_t(n')$ and thus $n=n'$, as the map $n \to \alpha_{i,t}(n)\phi_t(n)$ is a bijection. This leads to $(t,n)= (t',n')$, a contradiction that completes the proof of the proposition.   
\end{proof}

Now we turn our attention to abelian $p$-groups,  for some prime $p$. 
 It is well known that the automorphism group of a finite abelian group $G$ can be  described using the classification of $G$,  see  \cite{Hillar2007}. We apply their results \cite[Theorem 3.3 and 3.6]{Hillar2007} to  characterize  the  automorphisms of abelian $p$-groups. Let 
\[
G \cong \Z_{p^{e_1}} \times \cdots \times \Z_{p^{e_n}}
\]
be an abelian $p$-group with $1 \leq e_1 \leq \cdots \leq e_n$. In this form, an element of $G$ is a vector $(\overline{h}_1,\dots,\overline{h}_n)$, where each $h_i \in \Z$ and $\overline{h}_i \in \Z_{p^{e_i}}$ is the residue class of $h_i$ modulo $p^{e_i}$. If we denote by 
\[
\pi_i : \Z \to \Z_{p^{e_i}}
\]
the natural reduction map modulo $p^{e_i}$, then $\overline{h}_i = \pi_i(h_i)$. Define 
\[
\pi : \Z^n \to G
\]
to be the homomorphism
\[
\pi(h_1,\dots,h_n) = (\pi_1(h_1),\dots,\pi_n(h_n)) = (\overline{h}_1,\dots,\overline{h}_n).
\]
We also define $R_G$ to be the matrix ring (see  \cite[Lemma 3.2]{Hillar2007} )
\[
R_G = \{ A = (a_{ij}) \in \Mat_n(\Z) \mid  \, \,  p^{e_i-e_j } \mid a_{ij} \, \, \text { for } \, 1 \leq j \leq i \leq n \}.
\]
With this setup, the following statement holds:
\begin{theorem}[Theorem 3.3 and   3.6 in  \cite{Hillar2007}]\label{Thm:AutAbelian}
    Let  $G \cong \Z_{p^{e_1}} \times \cdots \times \Z_{p^{e_n}}$ be an abelian $p$-group, with $1 \leq e_1 \leq  \cdots \leq e_n$. Then the map $\psi: R_G \to \End(G)$  defined as  
    \[
\psi(A) (\overline{h}_1,\dots,\overline{h}_n)^T= \pi\big(A(h_1, \cdots, h_n)^T\big)
    \]
    is a surjective ring  homomorphism. Furthermore an endomorphism $\beta= \psi(A)$ of $G$ is an automorphism if and only if the reduction of  $ A$ modulo $p $  is invertible in $GL_n(\F_p)$.   
\end{theorem}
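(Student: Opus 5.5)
The statement is Theorem 3.3 and 3.6 of Hillar--Rhea, quoted from \cite{Hillar2007}. I would prove it in two stages: first that $\psi$ is a well-defined surjective ring homomorphism, then the invertibility criterion.

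\textbf{Well-definedness.} The key observation is that $A\in R_G$ maps the kernel $K=\ker\pi=\bigoplus_j p^{e_j}\Z$ into itself. Take $h\in K$, so $p^{e_j}\mid h_j$ for every $j$. Consider the $i$-th coordinate $\sum_j a_{ij}h_j$ and fix a term $j$:
\begin{itemize}
\item if $j\ge i$, then $e_j\ge e_i$, so $p^{e_i}\mid p^{e_j}\mid h_j$;
\item if $j<i$, then $p^{e_i-e_j}\mid a_{ij}$ and $p^{e_j}\mid h_j$, so $p^{e_i}\mid a_{ij}h_j$.
\end{itemize}
Hence each coordinate is divisible by $p^{e_i}$, so $AK\subseteq K$ and $\psi(A)$ is a well-defined endomorphism of $G=\Z^n/K$.

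\textbf{Ring structure.} The same computation, applied to a product of matrices, shows that $R_G$ is closed under products, so it is a subring of $\Mat_n(\Z)$. Additivity and multiplicativity of $\psi$ are then immediate, since composing the induced maps on the quotient corresponds to multiplying the matrices.

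\textbf{Surjectivity.} Let $\beta\in\End(G)$ and let $\epsilon_j$ be the standard generators of $G$. Write
\[
\beta(\epsilon_j)=(\overline{a}_{1j},\dots,\overline{a}_{nj}).
\]
The element $\epsilon_j$ has order $p^{e_j}$, so $\beta(\epsilon_j)$ has order dividing $p^{e_j}$. This forces $p^{e_i}\mid p^{e_j}a_{ij}$ for every $i$. For $i>j$ this says $p^{e_i-e_j}\mid a_{ij}$, so any integer lifts $a_{ij}$ give a matrix $A\in R_G$. Since $\psi(A)$ and $\beta$ agree on the generators, $\psi(A)=\beta$.

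\textbf{Invertibility criterion.} Let $\bar A$ denote the reduction of $A$ modulo $p$. The plan is to prove that $\psi(A)$ is an automorphism if and only if $A$ is a unit in $R_G$, and that this holds if and only if $\bar A\in\GL_n(\F_p)$.

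\emph{Reduction to a unit of $R_G$.} If $\bar A$ is invertible, then $\det A$ is prime to $p$. I would show that the adjugate-type inverse can be taken inside $R_G$. The plan is to use that $R_G$ is closed under inverses in the $p$-adic completion:
\begin{enumerate}
\item $R_G\otimes\Z_p$ is an order, i.e.\ a block upper-triangular-type ring with divisibility conditions.
\item An element of it is a unit exactly when its reduction modulo the Jacobson radical is invertible.
\item The relevant quotient is $\prod_\lambda \Mat_{m_\lambda}(\F_p)$, one block $\bar A_{\lambda\lambda}$ for each distinct exponent $\lambda$.
\end{enumerate}

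\emph{Invertibility of $\bar A$.} Order the coordinates by increasing $e$. Modulo $p$, the entries below the diagonal blocks vanish, since they are divisible by $p^{e_i-e_j}$ with $e_i>e_j$. So $\bar A$ is block upper triangular, and
\[
\det\bar A=\prod_\lambda \det\bar A_{\lambda\lambda}.
\]
Thus $\bar A$ is invertible exactly when every diagonal block is invertible modulo $p$.

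\emph{From a unit to an automorphism.} Units of $R_G$ map to automorphisms. An inverse in $R_G\otimes\Z_p$ can be truncated modulo $p^{e_n}$, which gives a genuine element of $R_G$ acting as the inverse on $G$.

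\emph{Converse.} If $\bar A$ is singular, then some diagonal block $\bar A_{\lambda\lambda}$ is singular. I would use this to exhibit a nonzero element of the layer $\Omega_1$ of exponent $\lambda$, modulo the lower layers, that is killed by $\psi(A)$. This shows $\psi(A)$ is not injective on $\Omega_1(G)$.

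\textbf{Main obstacle.} I expect the hardest step to be this converse, together with the layered bookkeeping: the socle filtration must interact correctly with the triangular shape of $A$ modulo $p$. I would handle it by restricting $\psi(A)$ to the socle $\Omega_1(G)\cong\F_p^n$. On the socle, $\psi(A)$ acts by an $\F_p$-linear map. Using the divisibility conditions, this map is $\bar A$ with some above-diagonal entries rescaled by powers of $p$, i.e.\ made to vanish. It is therefore block triangular with the same diagonal blocks $\bar A_{\lambda\lambda}$. Hence $\psi(A)$ is injective on the socle exactly when every $\bar A_{\lambda\lambda}$ is invertible, which gives the criterion.
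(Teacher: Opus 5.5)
The paper gives no proof of this statement: it is quoted from \cite{Hillar2007} and used as a black box. Your proposal is therefore a self-contained substitute rather than a variant of an argument in the paper, and it works once two things in the criterion part are fixed.

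The first half is correct and standard. The divisibility conditions are exactly what gives $AK\subseteq K$ for $K=\ker\pi$. In fact $R_G=\{A\in\Mat_n(\Z): AK\subseteq K\}$, which gives closure under products for free. Lifting the coordinates of $\beta(\epsilon_j)$ then gives surjectivity.

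For the automorphism criterion, the part that actually proves it is your final socle paragraph. It handles both directions once you add the standard fact that an endomorphism of a finite abelian $p$-group is injective iff its kernel meets $\Omega_1(G)$ trivially, and is then bijective by finiteness.

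The detour through units should be deleted. The claim that $\psi(A)$ is an automorphism iff $A$ is a unit of $R_G$ is false as stated: for $G=\Z_p$, $A=(1+p)$ induces an automorphism but $1+p\notin\Z^\times$. The right ring would be $R_G\otimes\Z_p$ or $R_G/\ker\psi$, and even there you never justify that the kernel of the diagonal-block reduction lies in the Jacobson radical. The ``layer modulo lower layers'' converse is also too vague to count.

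One correction in the socle step. In the basis $u_j=p^{e_j-1}\epsilon_j$ of $\Omega_1(G)$, the matrix of $\psi(A)$ has $(i,j)$-entry $p^{e_j-e_i}a_{ij}\bmod p$, which is an integer by the $R_G$ condition. Blocks with $e_i<e_j$ vanish, as you say. Blocks with $e_i>e_j$, however, become $a_{ij}/p^{e_i-e_j}\bmod p$, which need not be zero even though the corresponding entries of $\bar A$ are. So this matrix is not $\bar A$ with some entries erased; it is block \emph{lower} triangular. Its diagonal blocks are still the $\bar A_{\lambda\lambda}$, so its determinant is $\det\bar A$ and your conclusion survives.

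Two shorter routes are available:
\begin{itemize}
\item \emph{Use the head instead of the socle.} Under $G/pG\cong\F_p^n$, $\pi(h)+pG\mapsto h\bmod p$, the map induced by $\psi(A)$ is literally $\bar A$. Since $pG$ is the Frattini subgroup, $\psi(A)$ is surjective (hence bijective) iff $\bar A$ is invertible. No block bookkeeping is needed.
\item \emph{Use Cayley--Hamilton.} It puts $\mathrm{adj}(A)$ in $\Z[A]\subseteq R_G$. If $d=\det A$ is prime to $p$ and $dd'\equiv 1\pmod{p^{e_n}}$, then $\psi(d'\,\mathrm{adj}(A))$ inverts $\psi(A)$. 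Conversely, if $\psi(A)\psi(B)=\mathrm{id}$ with $B\in R_G$, then $AB-I\in\ker\psi$, whose entries are all divisible by $p$, so $\bar A\bar B=I$.
\end{itemize}
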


We can now prove 
\begin{theorem}\label{Thm:k<p-1-abelian-p-case}
  Assume $G$ is a finite abelian $p$-group and $k < p-1$ an integer. If $\beta_1, \dots, \beta_k$ are automorphisms of $G$, then there exists an automorphism $f$ of $G$ such that for each $i \in \{1, \dots, k\}$, the map $g \mapsto \beta_i(g)+f(g)$ is an automorphism of $G$. Consequently, $G$ satisfies property $\P_k$ for all $k < p-1$. 
\end{theorem}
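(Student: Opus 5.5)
Our plan is to look for $f$ among the scalar automorphisms $f = c\cdot \mathrm{id}$, or more generally to reduce modulo $p$ via \autoref{Thm:AutAbelian}, and choose $f$ so that every $\beta_i + f$ is invertible. Write $G \cong \Z_{p^{e_1}} \times \cdots \times \Z_{p^{e_n}}$ and pick matrices $B_i \in R_G$ with $\psi(B_i) = \beta_i$. Since $\psi$ is a ring homomorphism, for any $F \in R_G$ we have $\psi(B_i + F) = \beta_i + \psi(F)$. By \autoref{Thm:AutAbelian}, $\beta_i + \psi(F)$ is an automorphism exactly when $\overline{B_i + F} = \overline{B_i} + \overline{F}$ is invertible in $\Mat_n(\F_p)$. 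So the whole problem becomes a problem over $\F_p$: find $F \in R_G$ with $\overline F$ invertible and each $\overline{B_i} + \overline F$ invertible.

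The choice I would make is $F = -c I$ for a scalar $c \in \{1,\dots,p-1\}$. This lies in $R_G$ because its off-diagonal entries are $0$ and its diagonal entries place no divisibility condition ($p^0 \mid a_{ii}$). Its reduction $-cI$ is invertible for $c \not\equiv 0$. Then $\overline{B_i} - cI$ is singular if and only if $c$ is an eigenvalue of $\overline{B_i}$ in $\F_p$. The key point is that the reduction of $R_G$ modulo $p$ is block upper triangular, with blocks indexed by the distinct exponents. Entries $a_{ij}$ with $i>j$ and $e_i>e_j$ are divisible by $p$. Hence the characteristic polynomial of $\overline{B_i}$ factors as the product of the characteristic polynomials of the diagonal blocks, and the number of $\F_p$-eigenvalues of $\overline{B_i}$ is at most $n$. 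A naive count therefore gives up to $kn$ excluded values, which is too many when $n>1$. So the scalar choice alone fails in general, and this is the main obstacle.

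To get around it, I would choose $F$ block-diagonal, matching the diagonal blocks, and work one block at a time. Let $D$ be one diagonal block of size $m$ (the exponents equal there). Let $D_1,\dots,D_k$ be the corresponding blocks of the $\overline{B_i}$, each in $\GL_m(\F_p)$. We need $X \in \GL_m(\F_p)$ with every $D_i + X$ invertible. Block upper triangularity then gives invertibility of $\overline{B_i}+\overline F$. Take $X = -cM$, with $M$ chosen so that each $M^{-1}D_i$ has few eigenvalues. The simplest approach is a single-eigenvalue argument: choose $X = -c D_1$. Then $D_1 + X = (1-c)D_1$ is invertible for $c\neq 1$, but the other $D_i$ still cause trouble.

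A cleaner route is to use $m=1$ reductions, working over $\F_{p^m}$. Identify $\F_p^m$ with $\F_{q}$, $q=p^m$, so that $\GL_m(\F_p) \supseteq \F_q^\times$ acting by multiplication. Then set $X = -\lambda D_1$, with $\lambda$ ranging over the field elements. This still does not control the remaining $D_i$ directly. So instead I would use the \emph{polynomial/counting} argument: $\det(D_i + X)$, restricted to the line $X = tY$ for a fixed invertible $Y$, is a polynomial in $t$ of degree $m$ with nonzero constant term. Choosing $Y$ generic, for instance a matrix whose characteristic polynomial is irreducible of degree $m$ (a Singer cycle), makes each $Y^{-1}D_i$... but that is not controllable either.

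At that point I would fall back to induction on $n$ using \autoref{prop: Extension-property}-style filtration. The reason is that the triangular structure means only the eigenvalues of each $1\times 1$ diagonal entry matter once I refine to a full flag. Concretely, I would aim to show the reduction lands in a Borel subgroup after a change of basis. If that is impossible, I would take $\Omega_1(G)$ and $G/\Omega_1(G)$-type characteristic filtrations. I would apply the proposition, reducing to elementary abelian $G=\F_p^m$ with $\beta_i$ arbitrary in $\GL_m$, where the scalar argument then needs the counting estimate $|\{X: \text{some } D_i+X \text{ singular}\}| < |\GL_m|$. I expect this counting estimate to be the real heart of the proof. The estimate: each $D_i + X$ singular for at most a fraction $\le 1/(p-1)$-ish of invertible $X$, giving room when $k<p-1$.
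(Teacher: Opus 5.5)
Your reduction matches the paper's up to a point. Via \autoref{Thm:AutAbelian} the problem becomes finding $\overline F$ over $\F_p$ such that $\overline F$ and every $\overline{B_i}+\overline F$ are invertible. Reductions of elements of $R_G$ are block upper triangular, with blocks indexed by the distinct exponents. A block-diagonal $\overline F$ lifts to $R_G$, since its off-diagonal blocks are zero.

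So everything comes down to the homocyclic block problem: given $D_1,\dots,D_k\in\GL_m(\F_p)$ with $k\le p-2$, find $X\in\GL_m(\F_p)$ with every $D_i+X$ invertible. That is exactly where your proposal stops. You try $X=-cI$, then $X=-cD_1$, then Singer cycles, and abandon each. Finally you say you \emph{expect} a counting estimate with a fraction ``$\le 1/(p-1)$-ish'' to work. That estimate is the whole content of the theorem, and it is not proved.

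The missing argument is short. The set of $X$ with $D_i+X$ singular is the translate $T-D_i$ of the set $T$ of singular matrices, so it has exactly $|T|=p^{m^2}-|\GL_m(\F_p)|$ elements. A union bound leaves at least
\[
|\GL_m(\F_p)|-k|T| = p^{m^2}\Bigl[(k+1)\prod_{t=1}^m(1-p^{-t})-k\Bigr]
\]
admissible $X$. This is positive because $\prod_{t=1}^m(1-p^{-t})\ge 1-\sum_{t=1}^m p^{-t}>\frac{p-2}{p-1}\ge\frac{k}{k+1}$.

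Your ``fraction of invertible $X$'' version can also be made rigorous. The map $X\mapsto -D_i^{-1}X$ identifies the bad invertible $X$ with invertible matrices having eigenvalue $1$. The count in the proof of \autoref{Prop.Homocyclic.k=p-1} shows there are at most $|\GL_m(\F_p)|/(p-1)$ of these, and then $k\le p-2$ leaves room.

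Separately, your fallback through \autoref{prop: Extension-property} and an $\Omega_1$-filtration would not prove the statement as written. The bijection built in that proposition, $f(tn)=\sigma(t)\phi_t(n)$, is in general not a homomorphism. That route therefore yields only $\P_k$, not an automorphism $f$ with every $\beta_i+f$ an automorphism. It is also unnecessary: your block-diagonal reduction already reduces to the block problem while keeping $f=\psi(F)$ an automorphism.
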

                                                    
\begin{proof}

First assume that $G \cong \mathbb Z_{p^e}^n$ is a homocyclic $p$-group. By \autoref{Thm:AutAbelian}, for each automorphism $\beta_i$ there exists a matrix $A_i \in R_G$, where $R_G=\Mat_n(\Z)$ in the homocyclic case, such that $\beta_i=\psi(A_i)$ and $A_i \bmod p$ is non-singular, for all $i=1,\dots,k$. We observe that it is enough to find a matrix $B\in \Mat_n(\Z))$ such that both $B\bmod p$ and $(A_i +B )\bmod p$ are invertible for every $i$, as then $f:=\psi(B)$ will be the required automorphism of $G$.

Define
\begin{equation*}
T=\{X\in \Mat_n(\F_p)\mid \det X=0\},
\tag{1}
\end{equation*}
and denote by $\overline{ A_i}$ the reduction of $A_i$ modulo $p$. The condition that $A_i+B$ is invertible modulo $p$ is equivalent to requiring that
\begin{equation*}
\overline{ B} \notin T-\overline{ A_i}
\end{equation*}
for each $i=1,\dots,k$. (By $T-\overline{ A_i}$ we denote the set  consisting of the matrices $ X-\overline{ A_i }$ with  $X \in T $.) 

It is enough to prove that
\begin{equation*}
\big|\GL_n(\F_p)\big| - \big| \bigcup_{i=1}^k (T-\overline{A_i}) \big|>0 .
\end{equation*}

We have
\[
|\GL_n(\mathbb F_p)|= 
p^{n^2}\prod_{t=1}^n(1-p^{-t}),
\]
and, for each $i$,
\[
|T-\overline {A_i}|=|T|= p^{n^2}-|\GL_n(\F_p)|.
\]
Hence
\[
\left|\bigcup_{i=1}^k (T-\overline {A_i})\right|
\leq
k p^{n^2}\left[1-\prod_{t=1}^n(1-p^{-t})\right].
\]
Thus
\begin{align}\label{eq:countAutom}
\left|\GL_n(\F_p) \right| - \left| \bigcup_{i=1}^k (T-\overline A_i)\right|  &\geq 
p^{n^2}  \left[  \prod_{t=1}^n(1-p^{-t}) -  k\left(1-\prod_{t=1}^n(1-p^{-t})\right)  \right] \\
&=  p^{n^2}  \left[ (k+1)\prod_{t=1}^n(1-p^{-t})-k \right].
\end{align}

Furthermore,
\begin{align*}
\prod_{t=1}^n(1-p^{-t})
&\geq 1-\sum_{t=1}^n \frac{1}{p^t}  \\
&= 1-\frac{p^{-1}(1-p^{-n})}{1-p^{-1}} \\
&= 1-\frac{1-p^{-n}}{p-1} \\
&=\frac{p-2+p^{-n}}{p-1} > \frac{p-2}{p-1}.
\end{align*}
Since $k\leq p-2$, and the function $x\mapsto x/(x+1)$  is increasing, it follows that
$\frac{p-2}{p-1}\geq \frac{k}{k+1}$. Therefore
$ \prod_{t=1}^n(1-p^{-t})>\frac{k}{k+1}$. Together with \eqref{eq:countAutom}, this gives
\[
\left|\GL_n(\F_p)\right| - \left| \bigcup_{i=1}^k (T-\overline{A_i})\right|>0.
\]
Hence the theorem holds in this homocyclic case.

Now suppose that
 \[
  G \cong (\Z_{p^{e_1}})^{m_1}  \times \cdots \times (\Z_{p^{e_r}})^{m_r}
  \]
where  $1 \leq e_1 < \cdots < e_r$ and $n=m_1+\dots+m_r$.

As earlier, for each automorphism $\beta_i \in \Aut(G)$ there exists a matrix $A_i \in \R_G$ such that $\beta_i = \psi(A_i)$ and $A_i \bmod p$ is nonsingular,  for all $i=1, \dots, k$. Notice that,  in view of the definition of $R_G$ the reduced matrices  $\overline{A_i}$ are block upper triangular matrices of the form
\[
\overline A_i=
\begin{pmatrix}
\overline A_i^{1} & * & \cdots & * \\
0 & \overline A_i^{2} & \cdots & * \\
\vdots & \vdots & \ddots & \vdots \\
0 & 0 & \cdots & \overline{A}_i^{r}
\end{pmatrix}
\]
where $\overline {A}_i^j \in \GL_{m_j}(\F_p)$, for all $j=1, \dots, r$.

For every fixed $j$, the  argument for the homocyclic case provides an invertible matrix $\overline {B}^j \in \GL_{m_j}(\F_p)$ such that the matrices  $\overline{A}_i^j + \overline{B}^j  $ are all invertible and thus elements in $\GL_{m_j}(\F_p)$.

Now define the block diagonal matrix
\[
\overline B:=
\begin{pmatrix}
\overline B^{1} & 0 & \cdots & 0 \\
0 & \overline B^{2} & \cdots & 0 \\
\vdots & \vdots & \ddots & \vdots \\
0 & 0 & \cdots & \overline B^{r}
\end{pmatrix}.
\]
Clearly $ \overline B\in \GL_n(\F_p)$, because all its diagonal blocks are invertible.

Moreover, for every $i=1,\dots,k$, the matrix $\overline A_i + \overline B $ is again block upper triangular, and its diagonal blocks are precisely $\overline A_i^{j} + \overline B^{j} $,  for  $ j=1,\dots,r$.
These blocks are invertible by construction. Hence
\[
\overline A_i + \overline B\,  \in \GL_n(\F_p)  \, \, 
\text{for all } i=1,\dots,k.
\]

Choose a matrix $B\in R_G$ whose reduction modulo $p$ is $\overline B$. This is possible since $\overline B$ is block diagonal, so it is compatible with the divisibility conditions defining $R_G$. By \autoref{Thm:AutAbelian}, the map $f:=\psi(B) $ is an automorphism of $G$. 
Furthermore, for every $i=1,\dots,k$, the reduction of $A_i + B $ modulo $p$ is
$\overline A_i + \overline B$  which is non-singular.  Therefore, again by \autoref{Thm:AutAbelian}, the map 
$\beta_i +f =\psi(A_i + B )$  is an automorphism of $G$, for every $i=1,\dots,k$.

Hence $f$ has the desired properties, and the proof of the theorem is complete.
\end{proof}

For homocyclic that are  not cyclic $p$-groups, we can extend Theorem \ref{Thm:k<p-1-abelian-p-case} to $k=p-1$, as detailed in our next result.

\begin{proposition}\label{Prop.Homocyclic.k=p-1}
    Assume $G$ is a noncyclic homocyclic  $p$-group. If $\beta_1, \dots, \beta_{p-1}$ are automorphisms of $G$, then there exists an automorphism $f$ of $G$ such that for each $i \in \{1, \dots, {p-1}\}$, the map $g \mapsto \beta_i(g)+f(g)$ is an automorphism  of $G$. Consequently, $G$ satisfies property $\P_{p-1}$.   
\end{proposition}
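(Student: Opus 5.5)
The plan is to reduce to a matrix problem over $\F_p$ as in the homocyclic case of \autoref{Thm:k<p-1-abelian-p-case}, and then to replace the crude union bound by a sharper count based on eigenvalues. Write $G\cong \Z_{p^e}^n$ with $n\ge 2$, so that $R_G=\Mat_n(\Z)$. By \autoref{Thm:AutAbelian}, $\beta_i=\psi(A_i)$ with $\overline{A_i}\in\GL_n(\F_p)$. It suffices to find $\overline B\in \GL_n(\F_p)$ with $\overline{A_i}+\overline B\in\GL_n(\F_p)$ for $i=1,\dots,p-1$, and then set $f=\psi(B)$ for any integer lift $B$. The earlier estimate gives $(k+1)\prod_t(1-p^{-t})-k$, and with $k=p-1$ this is negative, so the union bound has to be improved.

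First I will reparametrize. Write $\overline B=X^{-1}$ with $X\in\GL_n(\F_p)$. Then
\[
\overline{A_i}+X^{-1}=X^{-1}(X\overline{A_i}+I),
\]
so the condition becomes that $-1$ is not an eigenvalue of $X\overline{A_i}$ for every $i$. For fixed $i$, the map $X\mapsto X\overline{A_i}$ is a bijection of $\GL_n(\F_p)$. Hence the number of bad $X$ for index $i$ is exactly $N_{-1}$, where $N_\lambda$ denotes the number of $Y\in\GL_n(\F_p)$ having $\lambda$ as an eigenvalue. Since $Y\mapsto -\lambda Y$ is a bijection of $\GL_n(\F_p)$, we have $N_\lambda=N_{-1}$ for every $\lambda\in\F_p^*$. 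The number of bad $X$ is therefore at most
\[
(p-1)N_{-1}=\sum_{\lambda\in\F_p^*}N_\lambda=\sum_{Y\in\GL_n(\F_p)} d(Y),
\]
where $d(Y)$ is the number of distinct eigenvalues of $Y$ in $\F_p^*$.

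The key step, and the main obstacle, is to show that $\sum_Y d(Y)<|\GL_n(\F_p)|$; note that $d(Y)$ may exceed $1$, which is exactly why the naive count failed. To handle this, let $\ell(Y)$ be the number of $Y$-invariant $1$-dimensional subspaces of $\F_p^n$. Each eigenvalue contributes at least one invariant line, so $d(Y)\le\ell(Y)$. By Burnside's lemma applied to the transitive action of $\GL_n(\F_p)$ on lines, $\sum_Y\ell(Y)=|\GL_n(\F_p)|$. The inequality is strict for at least one $Y$: for example $Y=I$ has $d(I)=1$ but $\ell(I)=(p^n-1)/(p-1)>1$, using $n\ge2$. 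This is exactly where noncyclicity is used.

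It follows that $\sum_Y d(Y)<|\GL_n(\F_p)|$, so some $X\in\GL_n(\F_p)$ is good for every $i$. Taking $\overline B=X^{-1}$ gives the required matrix, and then $f=\psi(B)$ together with all the maps $\beta_i+f=\psi(A_i+B)$ are automorphisms of $G$ by \autoref{Thm:AutAbelian}.
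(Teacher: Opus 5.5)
Your proof is correct and is essentially the paper's argument. The paper also reduces to showing that each bad set has the size of the set $K_0$ of invertible matrices with eigenvalue $1$; it does this via $S\mapsto -S\overline{A_i}^{-1}$ rather than your substitution $\overline B=X^{-1}$. It then proves $(p-1)|K_0|<|\GL_n(\F_p)|$ by double counting pairs $(S,v)$ with $Sv=v$, using transitivity on nonzero vectors, with strictness coming from $S=I_n$ when $n\ge 2$. Your version spreads the count over all eigenvalues and uses Burnside's lemma on invariant lines. This is the same identity reorganized, since $|E_S|-1$ equals $(p-1)$ times the number of lines in $E_S$, so the difference is only in presentation.
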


\begin{proof}
 Let $G = \Z_{p^e}^n$ for $n \geq 2$. Following the notation from the proof of Theorem \ref{Thm:k<p-1-abelian-p-case}, for each $\beta_i$ (where $i \leq p-1$), there is a corresponding matrix $A_i \in \Mat_n(\Z)$. We need to find a matrix $B \in \Mat_n(\Z)$ such that $\overline{B}$ and all $\overline{A_i} + \overline{B}$ are invertible in $\Mat_n(\F_p)$, for $i=1, \ldots, p-1$. Equivalently it is enough to show  that 
 \[
\GL_n(\F_p) \neq \bigcup_{i=1}^{p-1} \big[(T -\overline{A_i}) \cap \GL_n(\F_p)\big], 
 \]
 where $T = \{X \in \Mat_n(\F_p) \mid \det X = 0\}$. For $i=1, \dots, p-1$,  let 
 \[
K_i= (T-\overline{A_i} ) \cap \GL_n(\F_p),
 \]
 and $K_0= (T+ I_n) \cap \GL_n(\F_p)$. (We could have $K_0$ being one of the $K_i$ for some $i$.) 
 So to prove the proposition it suffices to show 
\begin{equation}\label{eq.sum-invertible-eigenvalue1}
    \big|\GL_n(\F_p)\big| > \big|\bigcup_{i=1}^{p-1} K_i\big|.
\end{equation}

We observe that $|K_i|= |K_0|$ for every $i$, as  the map $S \to -S (\overline{A_i})^{-1}$ provides  a bijection between $K_i$ and $K_0$. 

Let us note that 
\begin{align*}
K_0 &= \{ X+I_n=S \in GL_n(\F_p) \mid X \text{ singular } \} \\
&= \{ S \in \GL_n(\F_p) \mid \det{(S-I_n)}=0 \}\\
&= \{ S \in \GL_n(\F_p) \mid S v= v \, \, \text{ for some } \, 0 \neq v \in \F_{p}^n \}, 
\end{align*}
i.e. $K_0$ consists of those invertible matrices $S \in \GL_n(\F_p)$ whose $1$ is an eigenvalue.

To compute the cardinality of $K_0$ we look at the set 
\[
R= \{ (S, v) \in \GL_n(\F_p) \times \F_p^{n, *} \mid  Sv = v \}, 
\]
where $ \F_p^{n, *} $ are the non-zero vectors of $\F_p^n$.
Note that $\GL_n(\F_p)$ acts transitively on $\F_p^{n, *}$ hence the Orbit-Stabilizer theorem implies that for any fixed $v \in \F_p^{n,*}$, the stabilizer $\mathrm{Stab}_v$  in $\GL_n(\F_p)$ has order equal to  $\frac{|\GL_n(\F_p)|}{ |\F_p^{n,*}|}$,  as the orbit of $v$ is $\F_p^{n, *}$. 
Hence 
\[
|\mathrm{Stab}_v|= \frac{|\GL_n(\F_p)|}{p^n-1}
\]
and thus 
\begin{equation}\label{eq.|R|}
 |R| = \sum_{v \in \F_p^{n, *}}|\mathrm{Stab}_v|=  (p^n-1)|\mathrm{Stab}_v|=|\GL_n(\F_p)|.    
\end{equation}

On the other hand, for any fixed $S \in K_0$ if $E_S$ is the eigenspace of $S$ for the eigenvalue $1$ then $|R|= \sum_{S \in K_0} (|E_S|-1)$. 
But for any $S \in K_0$ its eigenspace for $1$ has dimension at least $1$ and thus $|E_S| \geq p$. One of those matrices of $K_0$   is $I_n$ with $E_{I_n}=\F_p^n$. Hence 
\[
|R|= \sum_{S \in K_0} (|E_S|-1) \geq (|K_0|-1)(p-1)+ p^n-1.
\]
This, along with \eqref{eq.|R|} implies 
\[
|K_0|\leq  \frac{|GL_n(\F_p)|}{p-1} - \frac{p^n-1}{p-1} +1 <  \frac{|GL_n(\F_p)|}{p-1}, 
\]
where the last inequality is strict as $n \geq 2$.
So 
\[
\big| \bigcup_{i=1}^{p-1} K_i\big| \leq \sum_{i=1}^{p-1} |K_i| = (p-1) |K_0| < {|GL_n(\F_p)|}.
\]
This completes the proof of the proposition.
\end{proof}

Using \autoref{prop: Extension-property} and \autoref{Thm:k<p-1-abelian-p-case} we can now prove that every group $G$ satisfies $\P_k$, provided  $k$ is  less  than $p-1$ for every $p $ prime divisor of the order $|G|$ of $G$. Let us point out  that the bijection $f$ in the following theorem is not necessarily an automorphism, unlike in \autoref{Thm:k<p-1-abelian-p-case} and \autoref{Prop.Homocyclic.k=p-1}.

\begin{proofofA}
Since $1 \leq k < p-1$, it follows that $p \geq 3$, so $G$ has odd order and is therefore solvable. Now we proceed by induction on $|G|$. 

Let $N$ be a non-trivial minimal characteristic subgroup of $G$. Because $G$ is solvable $N$ must be  an elementary abelian $q$-group for some prime number $q$. To see this first observe that $N'$ is a characteristic proper (as $N$ is solvable) subgroup of $N$ and thus of $G$. Hence $N'=1$ forcing $N$ being abelian. As the Sylow subgroups of $N$ are characteristic we conclude that $N$ is an abelian $q$-group.  Similarly, the subgroup $\Omega_1(N) = \{ n \in N \mid n^q=1\}$ is characteristic in $N$ and so minimality forces $\Omega_1(N)= N$. Thus  
\[
N \cong \Z_q^n, 
\]
with $q \geq p$, since  $p$ is the smallest prime divisor of $|G|$. Consequently,  $k < p-1\leq q-1$. 

If $N=G$, then $G$ is an elementary abelian $q$-group, and the result follows directly from \autoref{Thm:k<p-1-abelian-p-case}. Hence we may assume that $N<G$.
Consider the quotient group $G/N$, whose smallest prime divisor is at least $p$. 
By induction $\P_k$ holds for $G/N$ and it also holds for $N$. Thus \autoref{prop: Extension-property} implies that $G$ satisfies $\P_k$ completing the induction and the proof of the theorem.
\end{proofofA}

\section{Cyclic $p$-groups do not satisfy $\P_{p-1}$}

 \autoref{Thm:k<p-1-abelian-p-case} shows that abelian $p$-groups satisfy $\P_k$ for any $k < p-1$ (with the stronger conclusion that $f$ can be picked to be an automorphism  of $G$).  We will now prove  that $k < p-1$ is an  optimal uniform bound for abelian $p$-groups,  beginning with two lemmata.

The first  lemma  imitates the well-known property of cyclic $2$-groups, also crucial to the existence of complete maps, where the sum of their elements equals the group's unique non-zero involution.
\begin{lemma}\label{lem:Sum-In-Zpn}
Assume $p$ is a prime number and $n \geq 1$. Then 
\[    \sum_{x=1}^{p^n}  x^{p-1} \equiv -p^{n-1} \pmod{p^n}.
\]
\end{lemma}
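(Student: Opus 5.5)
We would prove the congruence by splitting the sum according to whether $x$ is a unit modulo $p^n$, and we would treat $p=2$ separately by direct computation. For $p=2$ the sum is $\sum_{x=1}^{2^n} x = 2^{n-1}(2^n+1)\equiv 2^{n-1}\equiv -2^{n-1} \pmod{2^n}$, which is the claim. So assume from now on that $p$ is odd. We argue by induction on $n$. The base case $n=1$ is Fermat's little theorem: the $p-1$ units each contribute $1$ and $x=p$ contributes $0$, giving $p-1\equiv -1 \pmod p$.

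\emph{Units.} For $p$ odd the group $(\Z/p^n\Z)^*$ is cyclic of order $p^{n-1}(p-1)$. Hence the map $x\mapsto x^{p-1}$ is a homomorphism onto the unique subgroup of order $p^{n-1}$, and every element of the image has exactly $p-1$ preimages. This subgroup is $U_1=\{1+pj : 0\le j<p^{n-1}\}$, since $U_1$ has that order. Therefore
\[
\sum_{\substack{1\le x\le p^n\\ p\nmid x}} x^{p-1}\equiv (p-1)\sum_{j=0}^{p^{n-1}-1}(1+pj) = (p-1)\Bigl(p^{n-1}+p\cdot\tfrac{p^{n-1}(p^{n-1}-1)}{2}\Bigr) \pmod{p^n}.
\]
Because $p$ is odd, $\tfrac{p^{n-1}-1}{2}$ is an integer, so the second term inside the bracket is divisible by $p^n$. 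The unit contribution is therefore $(p-1)p^{n-1}\equiv -p^{n-1}\pmod{p^n}$.

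\emph{Non-units.} For the non-units write $x=py$ with $1\le y\le p^{n-1}$. Their contribution is $p^{p-1}\sum_{y=1}^{p^{n-1}} y^{p-1}$. For $n\ge2$ the induction hypothesis gives $\sum_{y} y^{p-1} = -p^{n-2}+ p^{n-1}m$ for some integer $m$. The contribution is then $-p^{n+p-3}+p^{n+p-2}m$, and since $p\ge3$ both exponents are at least $n$, so it is $\equiv 0 \pmod{p^n}$. For $n=1$ the only non-unit is $x=p$, and $p^{p-1}\equiv 0\pmod p$. Adding the two contributions gives the lemma.

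The only step needing care is the description of the image of $x\mapsto x^{p-1}$ on units, which uses cyclicity of $(\Z/p^n\Z)^*$ for odd $p$. This is exactly why $p=2$ is handled separately.
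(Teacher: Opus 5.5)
Your proposal is correct and follows essentially the same route as the paper: treat $p=2$ separately, then for odd $p$ induct on $n$. Split the sum into units, where $x\mapsto x^{p-1}$ is $(p-1)$-to-one onto $\{1+pj\}$ by cyclicity of $(\Z/p^n\Z)^*$, and multiples of $p$, which vanish modulo $p^n$ by the inductive hypothesis since $p\ge 3$. The only cosmetic difference is how the image is identified: you use the uniqueness of the subgroup of order $p^{n-1}$, whereas the paper uses $x^{p-1}\equiv 1 \pmod p$ together with a cardinality count.
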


\begin{proof}
We first note that for $p=2$ the sum $\sum_{x \in \Z_{2^n}} x$ equals the unique nonzero involution  of $\Z_{2^n} $, that is the element $2^{n-1} \in \Z_{2^n}$ and so the lemma holds for $p=2$.

We may assume now that $p$ is odd and    we proceed by induction on $n$. The base case, $n=1$, is trivial:
\[
\sum_{x =1}^{p} x^{p-1} \equiv \sum_{x=1}^{p-1} 1 = p-1 \equiv -p^0 \pmod{p}.
\]
Assume the hypothesis holds for values less than $n$. We will now prove the inductive step for $n$. To compute the sum, we first split the elements of $\Z_{p^n}$ into two sets,  the set of units of the multiplicative  $\Z_{p^n}^*$ and the multiples of $p$:
\[
    \sum_{x=1}^{p^n} x^{p-1}= \sum_{(x,p)=1 \atop  1\leq x \leq p^n } x^{p-1} + \sum_{k=1}^{p^{n-1}} (kp)^{p-1}.
\]

Applying the inductive hypothesis (for $n-1$)
the second part of the sum simplifies to:
\[
\sum_{k=1}^{p^{n-1}} (kp)^{p-1}= p^{p-1} \sum_{k=1}^{p^{n-1}} k^{p-1} = p^{p-1} ( rp^{n-1} -p^{n-2}),
\]
for some integer $r$. 
Since $p-1 \geq 2$, we have 
$\sum_{k=1}^{p^{n-1}} (kp)^{p-1} \equiv 0 \pmod{p^{n}}$ and thus 
\[
\sum_{x=1}^{p^n} x^{p-1} \equiv  \sum_{(x,p)=1 \atop  1\leq x \leq p^n } x^{p-1} \pmod{p^n}.
\]

Now consider the endomorphism $\phi: \Z_{p^n}^* \to \Z_{p^n}^*$ defined by $\phi(x) = x^{p-1}$. Its kernel is 
\[\Ker(\phi)= \{ x \in \Z_{p^n}^* \mid x^{p-1}= 1\}.
\]
Since $\Z_{p^n}^*$ is cyclic (as $p$ is odd), the order of the kernel is $p-1$. Given that $|\Z_{p^n}^*|= (p-1)p^{n-1}$, we know the size of  its  image  $|\Ima (\phi)|=p^{n-1}$. Thus, each element in $\Ima (\phi)$ has $p-1$ preimages in $\Z_{p^n}^*$.

Consider the set $A \subseteq \Z_{p^n}^*$ defined as 
\[
A=\{ 1+ sp \mid 0 \leq s< p^{n-1} \},  
\]
whose cardinality  is $p^{n-1}$.  Notice that every element $x \in \Z_{p^n}^*$ satisfies $x^{p-1} \equiv 1 \pmod p$. Hence $\Ima (\phi) \subseteq A$ and by cardinality $\Ima (\phi) = A$. Hence 
\begin{align*}
    \sum_{(x,p)=1 \atop  1\leq x \leq p^n } x^{p-1} &=  (p-1) \sum _{s=0}^{p^{n-1}-1} (1+sp) \\
    &= (p-1) [p^{n-1} + p\sum_{s=0}^{p^{n-1}-1} s] \\
    &=  (p-1) [ p^{n-1} + p \frac{(p^{n-1}-1)p^{n-1}}{2}] \\
 &\equiv -p^{n-1} \pmod{p^n}.    
\end{align*}
This completes the proof of the lemma.

\end{proof}

Our next result states

\begin{lemma}\label{lem:PolynomialZero}
Let $p$ be a prime and $T(x) \in \Z_{p^n}[x]$ be a polynomial of degree at most $p-1$. If $T(x_i) \equiv 0 \pmod{p^n}$ for $p$ integers $x_1, \dots, x_p$ that are all nonequivalent modulo $p$, then $T(x)$ must be the zero polynomial.
\end{lemma}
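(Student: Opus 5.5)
The plan is to use Lagrange interpolation over $\Z_{p^n}$, where the key point is that differences $x_i-x_j$ with $i\neq j$ are units modulo $p^n$, because the $x_i$ are pairwise nonequivalent modulo $p$. Write $T(x)=\sum_{j=0}^{p-1} c_j x^j$ with $c_j\in\Z_{p^n}$. The hypothesis $T(x_i)\equiv 0 \pmod{p^n}$ for $i=1,\dots,p$ is the linear system $V c = 0$ over $\Z_{p^n}$. Here $V=(x_i^{\,j})_{1\le i\le p,\,0\le j\le p-1}$ is the $p\times p$ Vandermonde matrix and $c=(c_0,\dots,c_{p-1})^T$.

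First I will compute $\det V=\prod_{1\le i<j\le p}(x_j-x_i)$. Each factor is prime to $p$ by hypothesis, so $\det V$ is a unit in $\Z_{p^n}$. Over the commutative ring $\Z_{p^n}$, a square matrix with unit determinant is invertible: its inverse is $(\det V)^{-1}\operatorname{adj}(V)$. Multiplying $Vc=0$ on the left by $V^{-1}$ then gives $c=0$. So every coefficient of $T$ vanishes modulo $p^n$, and $T$ is the zero polynomial.

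As an alternative, or as a cross-check avoiding determinants over rings, I would argue by induction on the number of points via factor-stripping. Since $T(x_1)=0$, division by the monic polynomial $x-x_1$ gives $T(x)=(x-x_1)T_1(x)$ with $\deg T_1\le p-2$. For $i\ge 2$ we have $0=(x_i-x_1)T_1(x_i)$, and because $x_i-x_1$ is a unit this forces $T_1(x_i)=0$. Repeating this $p-1$ times leaves a constant that vanishes at $x_p$, hence is zero, and therefore $T=0$.

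There is no real obstacle here. The only point needing care is that $\Z_{p^n}$ is not a field, so "a nonzero polynomial of degree $d$ has at most $d$ roots" fails in general; for example, $p x$ has many roots modulo $p^2$. The argument must therefore use explicitly that the pairwise differences $x_i-x_j$ are units, which is exactly where the hypothesis that the $x_i$ are nonequivalent modulo $p$ enters.
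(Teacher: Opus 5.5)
Your proposal is correct, and your main argument is the paper's own proof: the Vandermonde system $Vc=0$ over $\Z_{p^n}$ has $\det V=\prod_{i<j}(x_j-x_i)$, which is a unit, so $V$ is invertible and every coefficient vanishes. Your factor-stripping alternative is also valid and shows more directly where the unit differences are used, but it is not needed.
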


\begin{proof}
Assume $T(x)= a_0+a_1x+\dots +a_{p-1}x^{p-1}$ with $a_i \in \Z_{p^n}$. The conditions $T(x_i)\equiv 0 \pmod{p^n}$ give the linear  system 
\[
\begin{pmatrix}
1 & x_1 & x_1^2 & \cdots & x_1^{p-1}\\
1 & x_2 & x_2^2 & \cdots & x_2^{p-1}\\
\vdots & \vdots & \vdots & & \vdots\\
1 & x_p & x_p^2 & \cdots & x_p^{p-1}
\end{pmatrix}
\begin{pmatrix}
a_0\\
a_1\\
\vdots\\
a_{p-1}
\end{pmatrix}
=
\begin{pmatrix}
0\\
0\\
\vdots\\
0
\end{pmatrix}
\qquad\text{in } \Z_{p^n}.
\]
The determinant of the coefficient matrix is the Vandermonde determinant
\[
\prod_{1\leq i<j\leq p}(x_j-x_i).
\tag{5}
\]
Since the elements $x_1,\dots,x_p $ are pairwise distinct modulo $p$,
none of the differences $x_j-x_i$ is divisible by $p$. Hence every
$x_j-x_i$ is a unit in $\Z_{p^n}$, and therefore the determinant  is also a unit in $\Z_{p^n}$.
Thus the Vandermonde matrix  above is invertible over $\Z_{p^n}$. It follows
that $a_0=a_1=\cdots=a_{p-1}=0 $ in  $\Z_{p^n}$.
Therefore $T$ is the zero polynomial in $\Z_{p^n}[x]$.
\end{proof}

Now we can prove

\begin{theorem}\label{thm:Cyclic-not-P_{p-1}}
Let $p$ be a prime and $n $ a positive integer. Then the  cyclic $p$-group $G=\Z_{p^n}$ does not satisfy $\P_{p-1}$, and thus does not satisfy $\P_k$ for any $k \geq p-1$.
\end{theorem}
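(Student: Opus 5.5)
The plan is to take the $p-1$ automorphisms $\beta_i(x)=ix$ of $G=\Z_{p^n}$ for $i=1,\dots,p-1$. These are automorphisms because each $i$ is a unit modulo $p^n$. We then argue by contradiction using power sums of order $p-1$, combining \autoref{lem:Sum-In-Zpn} and \autoref{lem:PolynomialZero}. Suppose $f$ is a bijection of $\Z_{p^n}$ such that each map $x\mapsto ix+f(x)$, for $1\le i\le p-1$, is a bijection. Since $f$ is itself a bijection, the same holds trivially for $i=0$. So for every $c\in\{0,1,\dots,p-1\}$ the map $h_c(x)=cx+f(x)$ is a permutation of $\Z_{p^n}$.

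The key observation is that a permutation preserves power sums. For each such $c$,
\[
\sum_{x\in \Z_{p^n}} \bigl(cx+f(x)\bigr)^{p-1} \equiv \sum_{x=1}^{p^n} x^{p-1}=:S \pmod{p^n}.
\]
We therefore introduce the polynomial in an indeterminate $c$
\[
T(c)=\sum_{x\in\Z_{p^n}}\bigl(cx+f(x)\bigr)^{p-1}-S\in \Z_{p^n}[c].
\]
Expanding by the binomial theorem shows that $T$ has degree at most $p-1$. Its coefficient of $c^{j}$ is $\binom{p-1}{j}\sum_x x^{j}f(x)^{p-1-j}$ for $1\le j\le p-1$, and its constant term is $\sum_x f(x)^{p-1}-S$. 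In particular, the coefficient of $c^{p-1}$ is exactly $\sum_x x^{p-1}=S$, and it does not depend on $f$ at all.

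By the displayed identity, $T$ vanishes at $c=0,1,\dots,p-1$. These are $p$ integers that are pairwise non-congruent modulo $p$, so \autoref{lem:PolynomialZero} forces $T$ to be the zero polynomial in $\Z_{p^n}[c]$. In particular its leading coefficient vanishes, so $S\equiv 0\pmod{p^n}$. But \autoref{lem:Sum-In-Zpn} gives $S\equiv -p^{n-1}\not\equiv 0 \pmod{p^n}$, which is a contradiction. Hence no such $f$ exists, and $G$ fails $\P_{p-1}$. Failure of $\P_k$ for every $k\ge p-1$ then follows from the remark at the start of Section~2 that $\P_k$ implies $\P_r$ for all $r\le k$.

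The step needing most care is the bookkeeping that identifies the leading coefficient of $T$ as $S$ rather than some expression involving $f$. The coefficient of $c^{p-1}$ in $(cx+f(x))^{p-1}$ is $x^{p-1}$, so this is immediate. The remaining work is to check that both lemmas apply as stated. Nonvanishing modulo $p^n$ is exactly the content of \autoref{lem:Sum-In-Zpn}, including the case $p=2$, where the argument reduces to the classical fact that $\Z_{2^n}$ has no complete mapping. \autoref{lem:PolynomialZero} needs only that $0,1,\dots,p-1$ are distinct modulo $p$.
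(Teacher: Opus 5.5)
Your proof is correct and is essentially the paper's argument. It uses the same automorphisms $a\mapsto ia$ and the same polynomial $T$ vanishing at $0,1,\dots,p-1$, and it reaches the same contradiction between Lemma~\ref{lem:PolynomialZero} and Lemma~\ref{lem:Sum-In-Zpn} through the leading coefficient $\sum_x x^{p-1}$. The only cosmetic difference is that the paper dispatches $p=2$ by citing Hall--Paige, while your argument covers it uniformly; this is legitimate because Lemma~\ref{lem:Sum-In-Zpn} includes $p=2$.
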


\begin{proof}
For $p=2$ the statement  is supported by the Hall-Paige theorem. For odd primes $p$, consider the automorphisms $\beta_i \in \Aut(G)$ defined by 
\[\beta_i(a)= ia,
\]
for $a \in \Z_{p^n}$ and $i=1, \dots , p-1$.
We assert that no bijection $f: G \to G$ exists such that the maps $a \mapsto ia+f(a)$ are bijections for all $i=1, \dots,p-1$.
Assume, for contradiction, that such a bijection $f: G \to G$ exists, meaning $a \mapsto ia+f(a)$ is a bijection for every $i=0,1,\dots,p-1$.

We define 
\[
T(x)= \sum_{a \in \Z_{p^n}} \big(xa + f(a)\big)^{p-1}  - \sum_{a \in \Z_{p^n}} a^{p-1}.
\]
Then $T(x) $ is a polynomial in $\Z_{p^n}[x]$ of degree $p-1$. 

Given that $ia+f(a)$ covers  all elements of $\Z_{p^n}$, for $a \in \Z_{p^n}$,  it follows that $T(i) = 0 \in \Z_{p^n}$,  for all $i=1, \cdots,p-1$. Also $T(0) = 0$ because $f$ is a bijection of $\Z_{p^n}$. Applying   \autoref{lem:PolynomialZero} we conclude that  $T(x)$ must be the zero polynomial in $\Z_{p^n}[x]$.
But, expanding the binomials in the definition of $T(x)$, we see that the coefficient of $x^{p-1}$ equals  
\[
\sum_{a=1}^{p^n} a^{p-1} \equiv -p^{n-1} \pmod{p^n}. 
\]
As $ -p^{n-1} \not \equiv 0 \pmod{p^n}$ we reach a contradiction and thus proving the proposition.

\end{proof}

The same argument implies that the cyclic obstruction is not limited to cyclic p-groups; it applies whenever the Sylow $p$-subgroup of an abelian group is cyclic, as the next proposition shows.

\begin{proposition}\label{prop: Abelian-P_k}
Assume $G$ is a finite abelian group satisfying $\P_k$.  Then for every prime $p$  with $p \leq k+1$ the Sylow $p$-subgroup $G_p$ is either  trivial or non-cyclic.   
\end{proposition}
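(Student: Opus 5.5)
We argue by contraposition. Suppose $G$ is a finite abelian group and $p\le k+1$ is a prime with $G_p$ nontrivial and cyclic, say $G_p\cong \Z_{p^n}$ with $n\ge 1$. We will exhibit $k$ automorphisms of $G$ for which no simultaneous bijection exists. Since $\P_k$ implies $\P_{p-1}$ (as $p-1\le k$), it suffices to violate $\P_{p-1}$. For $p=2$ this is exactly the Hall--Paige obstruction: $\P_1$ fails when the Sylow $2$-subgroup is nontrivial cyclic. So assume $p$ is odd.

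\textbf{Choice of automorphisms.} Write $G=G_p\times H$ with $|H|$ coprime to $p$. For $i=1,\dots,p-1$, define $\beta_i(a,h)=(ia,h)$. Each $\beta_i$ is an automorphism of $G$, since multiplication by $i$ is an automorphism of $\Z_{p^n}$ for $p\nmid i$.

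\textbf{Polynomial argument.} Suppose $f$ is a bijection of $G$ such that every map $g\mapsto \beta_i(g)+f(g)$ is a bijection, for $i=1,\dots,p-1$. Let $\pi:G\to \Z_{p^n}$ denote the projection onto the first factor, and write $g=(a_g,h_g)$. Consider the polynomial
\[
T(x)=\sum_{g\in G}\bigl(x a_g+\pi(f(g))\bigr)^{p-1}-\sum_{g\in G}\pi(g)^{p-1}\in \Z_{p^n}[x].
\]
Since $\pi(\beta_i(g)+f(g))=ia_g+\pi(f(g))$ and $g\mapsto\beta_i(g)+f(g)$ is a bijection of $G$, the multiset $\{ia_g+\pi(f(g))\}_{g\in G}$ equals the multiset $\{\pi(g)\}_{g\in G}$. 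Hence $T(i)=0$ for $i=1,\dots,p-1$. Since $f$ itself is a bijection, $T(0)=0$ as well.

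\textbf{Contradiction.} The values $0,1,\dots,p-1$ are pairwise distinct modulo $p$, so \autoref{lem:PolynomialZero} forces $T$ to be the zero polynomial. The coefficient of $x^{p-1}$ in $T$, however, is
\[
\sum_{g\in G}a_g^{p-1}=|H|\sum_{a\in\Z_{p^n}}a^{p-1}\equiv -|H|\,p^{n-1}\pmod{p^n}
\]
by \autoref{lem:Sum-In-Zpn}. Because $p\nmid |H|$, this is nonzero modulo $p^n$, a contradiction. Therefore $G$ fails $\P_{p-1}$, and hence fails $\P_k$.

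The only step beyond \autoref{thm:Cyclic-not-P_{p-1}} is ensuring that the extra factor $H$ does not destroy the argument. It enters only through the multiplicity $|H|$, which is a unit modulo $p$, and through the projection $\pi$, which is compatible with the choice of $\beta_i$.
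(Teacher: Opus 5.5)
Your proof is correct and follows essentially the same route as the paper. It uses the same automorphisms $\beta_i(a,h)=(ia,h)$ and the same polynomial $T(x)$ (your $\sum_{g\in G}\pi(g)^{p-1}$ is the paper's $m\sum_{a}a^{p-1}$), shows $T$ vanishes at $0,1,\dots,p-1$, and reaches the same leading-coefficient contradiction via \autoref{lem:PolynomialZero} and \autoref{lem:Sum-In-Zpn}. The only cosmetic difference is that you handle $p=2$ separately through Hall--Paige, whereas the paper's polynomial argument runs uniformly for all primes, since both lemmas also hold for $p=2$.
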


\begin{proof}
Assume, for contradiction, that the abelian group $G$ does not satisfy the proposition. Thus, $G$ satisfies $\P_k$ for some integer $k$, and its Sylow $p$-subgroup $G_p$ is cyclic for a prime $p \leq k+1$. This implies $k \geq p-1$, so $G$ satisfies $\P_{p-1}$. Let $G = G_p \times H$, where $H$ is an abelian group of order $m$ with $\gcd(m,p)=1$, and $G_p \cong \Z_{p^n}$ with $n \geq 1$. Consider the automorphisms $B_i \in \Aut(G)$ defined by
\[
B_i (a,h)= (ia, h),  \,  \text{  for all } \, a \in G_p, \,  h \in H,  
\]
for $i=1, \cdots , p-1$.
 By assumption, there exists a bijection $F: G \to G$ such that $B_i + F$ is also a bijection of $G$ for all $i$. We write
 \[
 \pi: G \to \Z_{p^n}
 \]
 for the projection  of $G$  onto $G_p$ composed with the isomorphism $G_p \cong \Z_{p^n}$ (we write $a$ for an element of $G_p$ and its isomorphic element in $\Z_{p^n}$). (So $\pi(B_i(a,h)) = ia$.)
 
 The fact that 
  $B_i + F$ is a bijection  for every $i$, implies that the multiset $\{\pi(B_i(g)+F(g))\mid g \in G\}$  picks every element of $Z_{p^n}$ exactly $m$-times and the same holds for the multiset $\{ \pi(F(g) ) \mid g \in G\}$.

  Similarly with the proof of \autoref{thm:Cyclic-not-P_{p-1}} we define 
 \[
T(x)= \sum_{g \in G } \big( x \pi(g)+ \pi (F(g)) \big)^{p-1}  - m\sum_{a \in \Z_{p^n}} a^{p-1}.
\]
Then $T(x) $ is a polynomial in $\Z_{p^n}[x]$ of degree at most $p-1$. 
Furthermore 
\begin{align}
m \cdot \sum_{a \in \Z_{p^n}} a^{p-1}&= \sum_{g \in G } \big( \pi( B_i(g) +F(g)) \big)^{p-1}=\sum_{g \in G } \big( \pi( B_i(g))  +\pi(F(g)) \big)^{p-1}\\
&= \sum_{h \in H } \sum_{a \in G_p} \big( i a + \pi (F((a,h)) \big)^{p-1}=\sum_{g \in G } \big( i \pi(g)+ \pi (F(g)) \big)^{p-1} .
\end{align}
 Hence $T(i)= 0$ for $i=1, \cdots p-1$. A similar argument shows that $T(0)=0$ because $F$ is a bijection of $G$. The rest of the proof follows th elinbes of that in \autoref{thm:Cyclic-not-P_{p-1}};  $T$ must be the zero polynomial by \autoref{lem:PolynomialZero}, while its leading coefficient is 
 \[
\sum_{g \in G} (\pi(g)) ^{p-1} = m \sum_{a \in \Z_{p^n}}  a^{p-1} \equiv -mp^{n-1} \pmod {p^n}.
 \]
As $(m,p)=1$ the leading coefficient is not zero contradicting the conclusion $T=0$. The proposition now follows.
\end{proof}

We remark here that the requirement that $G$ be abelian in \autoref{prop: Abelian-P_k} cannot be removed. A GAP calculation demonstrates that the non-abelian group $G = \Z_7\rtimes \Z_3$ of order 21 satisfies $\P_2$, even though its Sylow 3-subgroup is cyclic.

The preceding proposition naturally raises the following question,
which may be regarded as a simultaneous analogue of the Hall--Paige
theorem for abelian $p$-groups.

\begin{question}\label{quest:1}
 Let $G$ be a finite abelian $p$-group. Is it true that $G$ satisfies
$\P_{p-1}$ if and only if $G$ is noncyclic? Equivalently, does every
noncyclic finite abelian $p$-group satisfy $\P_{p-1}$?   
\end{question}

Let us remark, that considering the characteristic subgroups of an abelian $p$-group and \autoref{prop: Extension-property}, the question simplifies to groups of type $G \cong \Z_p^{n} \times \Z_{p^{e}}$, where $n, e \geq 1$. The base case $n=e=1$ forms a homocyclic group, for which the question is already confirmed.

\section{Applications} \label{sec:applications}

\subsection{Latin Squares}
Mann in the 1940's introduced complete mappings in connection with construction of orthogonal Latin squares \cite{Man42}. A Latin square of order $n$ is an $n \times n$  array of $n$
symbols in which each symbol occurs exactly once in each row and in each column. 
A {\it transversal}  in a Latin square $L$ of order $n$  is a collection of $n$ cells which do not share any row, column, or symbol.
Two Latin squares $L$ and $M$ of order $n$  are orthogonal  if, given symbols $x$ and $y$ from the alphabets of $L$ and $M$ respectively, there is a unique position $(i,j)$ in the array such that $L(i,j)  = x$ and $M(i,j) = y$,  or equivalently the $n^2$  ordered pairs $(L(i,j) , M(i,j )) $  are distinct. A Latin square orthogonal to a given square $L$ is called an {\it orthogonal mate} of $L$. 
(For further information on Latin squares, their transversals, and orthogonal mates, readers may consult \cite{Wan11} and \cite{Mont24} and the  references therein.)

The Cayley table of finite group $G$  is a natural source of Latin squares (where rows and columns are indexed by the elements of $G$, with the $(g,h)$-entry being  $gh$).
A question asked by Hall and Paige in 1955 was; { \it 
For which groups $G$  does the Cayley table have an orthogonal mate? } The question turns out to be equivalent to determine what groups afford a complete mapping and this in turn yield the famous Hall-Paige conjecture (now theorem). 

In that spirit we try to apply our \autoref{Thm:main} to Latin squares in the following setting; 
Fix a finite group $G$ and for each automorphism $\beta_i \in \Aut(G)$ define a Latin square on $G$ by 
\[
L_i(x,y)=\beta_i(x)y, 
\]
i.e. $L_i$ is the Latin square produced from the Cayley table of $G$ where its rows are permuted by $\beta_i$. Assume now that $G$ satisfies $\P_k$ and let $f$ be a permutation of $G$ so that the maps $x \to \beta_i(x)f(x)$ are also permutations. 
Then the set 
\[
T_f=\{ (x, f(x) ) \mid x \in G\}
\]
forms a transversal for all $L_i$ (because both $f$ and  $\beta_i(x)f(x)$ are permutations on the elements of $G$). In addition, the Latin square 
\[
L_f(x,y):= f(x)^{-1}y 
\]
forms an orthogonal mate of $L_i$, for all $i=1, \dots, k$, as the ordered pairs $( f(x)^{-1}y, \beta_i(x)y)$ can be easily verified to be distinct.  

 Consequently, \autoref{Thm:main} implies \autoref{col:commonTrans+Mate} that we restate here for convenience.
 \begin{corollary*}
     Let $G$ be a finite group whose smallest prime divisor is $p$. If $k < p -1$, then for any automorphisms $\beta_1, \dots, \beta_k$  the Latin squares $L_i(x,y)=\beta_i(x)y$ have a common transversal and a common orthogonal mate.
\end{corollary*}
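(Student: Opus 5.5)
The corollary follows from \autoref{Thm:main} together with the two verifications sketched just before the restatement. The plan is as follows.

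Fix $\beta_1,\dots,\beta_k\in\Aut(G)$ with $k<p-1$. By \autoref{Thm:main}, $G$ satisfies $\P_k$. So there is a bijection $f$ of $G$ such that each map $x\mapsto \beta_i(x)f(x)$ is a bijection.

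For the common transversal, I will take $T_f=\{(x,f(x))\mid x\in G\}$. I will check three things for each $L_i$:
\begin{itemize}
\item[(i)] The cells of $T_f$ occupy distinct rows, since $x$ ranges over $G$.
\item[(ii)] They occupy distinct columns, since $f$ is a bijection.
\item[(iii)] They carry distinct symbols $L_i(x,f(x))=\beta_i(x)f(x)$, since $x\mapsto\beta_i(x)f(x)$ is a bijection.
\end{itemize}
Thus $T_f$ is a transversal of every $L_i$ simultaneously.

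For the common orthogonal mate, I will take $L_f(x,y)=f(x)^{-1}y$. First, $L_f$ is a Latin square. In a fixed row $x$, the map $y\mapsto f(x)^{-1}y$ is a bijection. In a fixed column $y$, the map $x\mapsto f(x)^{-1}y$ is a bijection because $f$ and inversion are.

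For orthogonality with $L_i$, suppose $f(x)^{-1}y=f(x')^{-1}y'$ and $\beta_i(x)y=\beta_i(x')y'$. Solving for $y$ in both equations gives $f(x)f(x')^{-1}=\beta_i(x)^{-1}\beta_i(x')$, so
\[
\beta_i(x)f(x)=\beta_i(x')f(x').
\]
Because $x\mapsto \beta_i(x)f(x)$ is injective, this forces $x=x'$, and then $y=y'$. Hence the $|G|^2$ pairs $(L_f(x,y),L_i(x,y))$ are distinct, and $L_f$ is orthogonal to each $L_i$.

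There is no real obstacle here. The only point needing care is the rearrangement in the orthogonality step: the equation must be manipulated so that it becomes exactly an equality of values of the map $x\mapsto\beta_i(x)f(x)$, whose injectivity is what $\P_k$ supplies. All the substantive work is contained in \autoref{Thm:main}.
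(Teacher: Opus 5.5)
Your proposal is correct and follows the paper's argument exactly: you invoke Theorem~\ref{Thm:main} to obtain $f$, take $T_f=\{(x,f(x))\}$ as the common transversal, and take $L_f(x,y)=f(x)^{-1}y$ as the common orthogonal mate. You also supply the details the paper leaves as ``easily verified'': that $L_f$ is a Latin square, and that orthogonality reduces to injectivity of $x\mapsto\beta_i(x)f(x)$.
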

Observe that if $L_1, \dots, L_k$  happen to be mutually orthogonal, then the corollary implies  that they are not maximal: they can be extended to $k+1$-mutually orthogonal Latin squares. That is,  no mutually orthogonal family of fewer than $p-1$ automorphism-generated Latin squares of a group table is maximal, where $p$ is the smallest prime divisor of $|G|$.

For abelian $p$-groups \autoref{Thm:k<p-1-abelian-p-case} implies 
\begin{corollary}\label{col:Abelian-commonMate}
   If $G$  is a finite  abelian $p$-group and $ k\le p-2$, then every $k$- Latin squares of the form
$L_\beta(x,y)=\beta(x)+y$, with  $\beta\in\Aut(G)$, have a common orthogonal mate of the same form. 
\end{corollary}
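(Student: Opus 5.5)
The plan is to apply \autoref{Thm:k<p-1-abelian-p-case} directly and then read off the orthogonality in the additive setting. Let $L_{\beta_i}(x,y)=\beta_i(x)+y$ for $\beta_1,\dots,\beta_k\in\Aut(G)$ with $k\le p-2<p-1$. By \autoref{Thm:k<p-1-abelian-p-case} there is an automorphism $f$ of $G$ such that every $\beta_i+f$ is an automorphism of $G$. We then have to decide which square of the form $L_\gamma(x,y)=\gamma(x)+y$, with $\gamma\in\Aut(G)$, to offer as the common mate.

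The discussion preceding \autoref{col:commonTrans+Mate} suggests the mate $L_f(x,y)=-f(x)+y$. Writing $\gamma:=-f$, this is again an automorphism of $G$, since negation is an automorphism of an abelian group. So $L_\gamma$ is of the required form. The one point to check is that $-f$ works in the orthogonality condition; this is the only step needing a small sign check.

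For the orthogonality, suppose $(L_\gamma(x,y),L_{\beta_i}(x,y))=(L_\gamma(x',y'),L_{\beta_i}(x',y'))$. Subtracting the two coordinates gives
\[
\beta_i(x)+f(x)=\beta_i(x')+f(x').
\]
Since $\beta_i+f$ is injective, this forces $x=x'$, and then $y=y'$. Hence the $|G|^2$ ordered pairs are distinct, so $L_\gamma$ is orthogonal to each $L_{\beta_i}$ simultaneously.

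Finally, I will remark that $L_\gamma$ is itself a Latin square, because $\gamma$ is a bijection and translation by $y$ is a bijection. This completes the argument. The main obstacle is essentially none: the work is all contained in \autoref{Thm:k<p-1-abelian-p-case}, and the only care needed is the sign convention, choosing $-f$ rather than $f$ so that the difference of the two coordinates is exactly $\beta_i+f$.
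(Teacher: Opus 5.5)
Your proof is correct and follows the paper's argument. You apply \autoref{Thm:k<p-1-abelian-p-case} to get an automorphism $f$ with every $\beta_i+f$ an automorphism, and take as mate $L_{-f}(x,y)=-f(x)+y$, which is the additive form of the paper's $L_f(x,y)=f(x)^{-1}y$ from the discussion preceding \autoref{col:commonTrans+Mate}; your subtraction check reducing orthogonality to injectivity of $\beta_i+f$ is exactly the verification the paper leaves implicit.
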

We also remark here that if $G$ is noncyclic but  homocyclic then the Corollary above improves from $p-2$ to $p-1$. Hence if $G$ is a noncyclic homocyclic $p$-group, no mutually orthogonal set of $p-1$-automorphism-generated Latin squares is maximal, it can be extended to $p$ such automorphism-generated Latin squares of $G$.

\subsection{Common transversals of subgroups of the same index in a finite abelian group}
Let  $X, Y, Z_i$, for $i =1, 2, \dots, k$, be distinct  subgroups of the same index in  a finite group $G$. As we mentioned in the introduction, the ideas behind the simultaneously complete mappings arose naturally during our effort to generalize the main result in \cite{LS}, i.e., while we were trying to find a common transversal for the subgroups $\{ X, Y, Z_i \mid 2\leq i \leq  k\}$ (the case  $k=1$ was done in \cite{LS}).
The aim for this section is to demonstrate the connection between common transversals of subgroups and simultaneously complete mappings.  The main theorem of the section states:
\begin{theorem}\label{Thm:CommonTrans}
Assume $G$ is a finite abelian group so that $G= X\oplus Y$ with $|X|=|Y|=m$. Assume further that $\{Z_j\}_{j \leq k}$   are subgroups of $G$ of the same order $m$ that intersect trivially both $X$ and $Y$. If $k \leq p-2$, where $p$ is the smallest prime divisor of  $m$, then the  collection of subgroups $\{ X, Y, Z_j \mid j \leq k \}$ share a transversal in $G$.    
\end{theorem}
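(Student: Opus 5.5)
We reduce the statement to property $\P_k$ for $X$ by writing each $Z_j$ as the graph of an isomorphism $Y\to X$, and then apply \autoref{Thm:main} to $X$.

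\textbf{Step 1: each $Z_j$ is a graph.} Let $\pi_X,\pi_Y$ be the projections of $G=X\oplus Y$. Since $Z_j\cap X=0$, the restriction $\pi_Y|_{Z_j}$ is injective. As $|Z_j|=|Y|=m$, it is an isomorphism $Z_j\to Y$. Hence $Z_j=\{\gamma_j(y)+y\mid y\in Y\}$ for a homomorphism $\gamma_j:Y\to X$. The condition $Z_j\cap Y=0$ says $\gamma_j$ is injective, so $\gamma_j$ is an isomorphism.

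\textbf{Step 2: candidate transversal.} Fix the isomorphism $\gamma_1:Y\to X$ and put $\beta_j:=-\gamma_j\gamma_1^{-1}\in\Aut(X)$ for $j=1,\dots,k$. The smallest prime divisor of $|X|=m$ is $p$ and $k\le p-2<p-1$. So \autoref{Thm:main} applied to $X$ gives a bijection $f:X\to X$ such that each $x\mapsto\beta_j(x)+f(x)$ is a bijection of $X$. Define
\[
S=\{\,x+\gamma_1^{-1}(f(x)) \mid x\in X\,\}.
\]
Then $|S|=m$, which is the common index of all the subgroups, since $|G|=m^2$.

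\textbf{Step 3: verify $S$ is a transversal of each subgroup.} Since $|S|$ equals the index, it suffices that distinct elements of $S$ lie in distinct cosets.
\begin{itemize}
\item Modulo $Y$: the coset of $x+\gamma_1^{-1}(f(x))$ is determined by $x$, which varies over all of $X$.
\item Modulo $X$: the coset is determined by $\gamma_1^{-1}(f(x))$, which is a bijection of $x$.
\item Modulo $Z_j$: the map $\pi_X-\gamma_j\pi_Y:G\to X$ is a surjective homomorphism with kernel exactly $Z_j$, so $G/Z_j\cong X$ via this map. It sends $x+\gamma_1^{-1}(f(x))$ to $x-\gamma_j\gamma_1^{-1}(f(x))=x+\beta_j(f(x))$.
\end{itemize}

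This exposes the main obstacle: the order of composition. The map $x\mapsto x+\beta_j(f(x))$ is not directly of the form guaranteed by $\P_k$. We fix this by reparametrizing with $u=f(x)$, which is legitimate because $f$ is a bijection. Then $x+\beta_j(f(x))$ becomes $f^{-1}(u)+\beta_j(u)$. We therefore need, for every $j$, that $u\mapsto \beta_j(u)+f^{-1}(u)$ is a bijection. To arrange this, apply \autoref{Thm:main} to the automorphisms $\beta_j$ and take $g$ to be the resulting bijection of $X$ with each $u\mapsto\beta_j(u)+g(u)$ bijective. Then set $f:=g^{-1}$ in the definition of $S$.

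With this choice, all $k+2$ coset conditions hold, and $S$ is a common transversal of $\{X,Y,Z_j\mid j\le k\}$. The remaining bookkeeping to check is the kernel computation for $\pi_X-\gamma_j\pi_Y$ and the signs in the definition of $\beta_j$.
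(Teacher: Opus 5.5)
Your proposal is correct and follows essentially the same route as the paper. Both realize each $Z_j$ as the graph of an isomorphism between $X$ and $Y$, convert these into automorphisms via a fixed identification, apply Theorem~\ref{Thm:main}, and take the graph of the resulting bijection as the common transversal. Your composition-order detour through $g^{-1}$ is handled correctly; writing $S=\{g(u)+\gamma_1^{-1}(u)\mid u\in X\}$ from the start would avoid it, just as the paper avoids it by parametrizing $F$ by $Y$ and applying $\P_k$ in $Y$.
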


Before giving its proof (through complete mappings) we should mention that the theorem's premise mandates that all involved subgroups are complemented in $G$, rendering Theorem A in \cite{ALS} directly applicable and its conclusion trivial. Nevertheless, we decided to include its proof here to motivate further research into simultaneously complete mappings.

\begin{proof}
We begin by noting that $G = X \oplus Z_j = Y \oplus Z_j$ for all $j=1, \dots, k$. Hence $X \cong Y$. 
We fix an enumeration $X = \{ x_1, \cdots, x_m\}$ of $X$  and we enumerate the elements of $Y$ with respect of the fixed enumeration of $X$ and the isomorphism $X \cong Y$ (that is,  if $x_i +x_j=x_t$ then $y_i +y_j=y_t$).  
Now we claim that for every $j=1, \dots,  k$,  the elements of the subgroup $Z_j$ can be written as 
\[
Z_j= \{ z_i^j= x_i+ \theta_j(y_i)  \mid i \leq m \}
\]
for some automorphism $\theta_j \in \Aut(Y)$.

To prove the claim, we fix $j$ and we  first observe that every element $z \in Z_j$  has a unique expression $z=x+y$ with $x \in X$ and $y \in Y$  and  distinct elements $z_1, z_2 \in Z_j$ do not share the same $X$ or $Y$-part (as $Z_j \cap X = Z_j \cap Y = \{0\}$).  Hence we can enumerate the elements of $Z_j$ with respect to their $X$-part as 
\begin{equation}\label{eq:z=x+y}
z_i^j =x_i + \theta_j(y_i),
\end{equation}
where $\theta_j$ is a bijection of $Y$. To complete the proof of the claim it remains to show that $\theta_j$ are actually homomorphisms.  But if $z_t^j=z_i^j +z_{i'}^j $,  for some $t$,  then $x_i+x_{i'}=x_t$ and  $ \theta_j(y_i) +\theta_j(y_{i'})= \theta_j(y_t)=\theta_j(y_i+y_{i'}) $, with the last equality following from the way we enumerated $Y$. This proves the claim.

According to \autoref{Thm:main}, there exists a bijection $f:Y \to Y$ so that $f-\theta_j$ are also bijections of $Y$ for all $j=1, \dots, k$, as $k \leq p-2$. Take the set 
\[
F=\{ x_i +f(y_i) \mid i \leq m \}. 
\]
we will show that $F$ is a common transversal for all subgroups  $X, Y, Z_j $, with $1 \leq j \leq k$.  
Clearly $F$ is a transversal of  $X$ (as $f$ is a bijection) and of $Y$. For a fixed $j$, using \eqref{eq:z=x+y} we get  
\[
x_i+f(y_i) \equiv -\theta_j(y_i) +f(y_i) \pmod {Z_j}.
 \]
Because $f-\theta_j$ are bijections for all $j$, all the elements of $F$ are nonequivalent modulo $Z_j$ and thus $F$ is a transversal of $Z_j$.
This completes the proof of the theorem. 

\end{proof}

{\bf Acknowledgment.}
I thank E. Spyridakis for stimulating discussions on the subject. 

\bibliographystyle{amsalpha}
\bibliography{Bibliography}

\end{document}